\newtheorem{conjecture}{Conjecture}[section]
\newtheorem{lemma}[conjecture]{Lemma}
\newtheorem{proposition}[conjecture]{Proposition}
\newtheorem{theorem}[conjecture]{Theorem}
\newenvironment{proof}{\noindent {\bf Proof} \hspace{.1cm}}{\hfill ${\bf QED}$ \\ \vspace{.15cm}} 
\title{Prising apart geodesics by length in hyperbolic manifolds}
\author{James W. Anderson}
\begin{document}

\maketitle

\begin{abstract} 
\noindent In this note, we develop a condition on a closed curve on a surface or in a $3$-manifold that implies that the length function associated to the curve on the space of all hyperbolic structures on the surface or in the $3$-manifold (respectively) completely determines the curve.  Specifically, for an orientable surface $S$ of negative Euler characteristic, we extend the known result that simple curves have this property to curves with self-intersection number one (with one exceptional case arising from hyperellipticity that we describe completely).  For a large class of hyperbolizable $3$-manifolds, we show that curves freely homotopic to simple curves on $\partial M$ have this property. \\

\noindent
MSC 2000: 57M50, 30F40, 20H10

\noindent
keywords: length, hyperbolic metric, simple curves, Horowitz tuples
\end{abstract}

\section{Introduction and statement of results}
\label{introduction}

Randol \cite{randol}, building on earlier work of Horowitz \cite{horowitz} on characters of representations of free groups into ${\rm SL}_2 ({\mathbb C})$, makes the remarkable observation that on an orientable surface $S$ of negative Euler characteristic, there exist pairs of distinct (homotopically non-trivial) closed curves having the property that their lengths are equal to one another in each hyperbolic structure on $S$.  In fact, for any $n\ge 2$, there exist $n$-tuples of closed curves on $S$ whose lengths are equal to one another in each hyperbolic structure on $S$.   

As Randol notes, earlier work of Abraham \cite{abraham} demonstrates that such tuples of curves do not exist for general families of metrics, making the existence of such tuples a phenomenon of constant negative curvature metrics.   (It is not known whether the existence of such tuples characterises or largely characterises constant negative curvature metrics among all metrics.)  Since Randol's observation, much effort has been undertaken to characterise in some way such tuples of curves on a surface; however, such efforts are still incomplete.   We note here in particular the work of Ginzburg and Rudnick \cite{ginzburg rudnick}, in which they develop a condition on the exponents of a word $w$ in the free group of rank two which implies that $w$ cannot belong to any such tuple; the work of Leininger, who shows that the straightforward necessary topological condition for two curves to have the same length is not sufficient; and the work of Masters \cite{masters}, who demonstrates the existence of such tuples in $3$-dimensional hyperbolic manifolds.   We refer the interested reader to the survey by the author \cite{anderson survey}  for a discussion of variants of this question and known results.  

In this note, we take a different tack and consider an obverse question to the characterisation question described briefly above.  Specifically, we develop a condition on a closed curve on a surface or in a $3$-manifold that implies that the curve cannot belong to one of the tuples described above, for either surfaces or $3$-manifolds.  That is, we develop a condition that implies that the length function associated to a closed curve on the space of all hyperbolic structures on the surface or in the $3$-manifold (essentially) completely determines the curve.  We also discuss why no stronger similar conditions hold, at least of the sort discussed herein.

We note here that in her thesis, Bright \cite{bright} used different techniques to give partial results towards answering this question of developing such a condition for core curves in a book of $I$-bundles $M$ which implies that there is no other curve in $M$ with the same length over all hyperbolic structures on ${\rm int}(M)$.  

McShane \cite{mcshane homeomorphisms} shows that the length function associated to a simple curve $C$ on an orientable hyperbolic $S$ completely determines $C$.    We extend this to show, for a surface $S$ which is the interior of a compact orientable surface of negative Euler characteristic, that for a curve $C$ on $S$ with self-intersection number one, the length function $\ell_C$ associated to $C$ completely determines $C$, with two non-avoidable exceptions related to hyperellipticity.

\begin{theorem} Let $S$ be the interior of a compact, orientable surface $\Sigma$ of negative Euler characteristic.  Let $C$ be a curve on $S$ with self-intersection number one.  If $C'$ is a curve on $S$ that satisfies $\ell_{C'} (\rho) = \ell_C (\rho)$ for all $\rho\in {\cal D}(\pi_1(S))$, then either $C' = C$ or we are in the case of the hyperelliptic exception, so that one of the following holds:
\begin{enumerate}
\item We have that $S = \Sigma_2$ is the closed, orientable surface of genus two with hyperelliptic involution $\tau$, the curve $C$ is contained in the interior of a pair of pants $P\subset \Sigma_2$ for which every boundary component of $P$ is a non-separating curve on $\Sigma_2$, and $C' = \tau(C)$. 
\item There exists an embedded torus with one hole $T\subset \Sigma$ containing $C$, that $\tau$ is the hyperelliptic involution restricted to $T$, and that $C' = \tau(C)$.  
\end{enumerate}
\label{one-self-intersection}
\end{theorem}

The proof of Theorem \ref{one-self-intersection} follows relatively straightforwardly from standard properties of hyperbolic metrics on surfaces, including the Collar Lemma and the fact that the length of a non-simple curve on a hyperbolic surface has a positive universal lower bound, independent of the surface and the curve, over all hyperbolic structures on the surface.  

The main result of this note is to present the following extension to a wide class of $3$-manifolds with interiors admitting hyperbolic metrics.  (Full definitions are given in Section \ref{preliminaries}.)

\begin{theorem} Let $M$ be a compact, hyperbolizable $3$-manifold with non-empty, incompressible, atoroidal boundary.  Assume that $M$ is not an $I$-bundle over a surface.  Let $C$ be a  curve in $M$ freely homotopic to a simple curve on $\partial M$. If $C'$ is a curve in $M$ which satisfies $\ell_{C'} (x) = \ell_C (x)$ for all hyperbolic structures $x$ on ${\rm int}(M)$, then $C' = C$.  
\label{separating-by-length}
\end{theorem}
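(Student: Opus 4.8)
The plan is to reduce the statement, by a pinching deformation carried out on the conformal boundary, to the essentially formal fact that a simple curve on $\partial M$ is determined by its length function together with the behaviour of nearby hyperbolic structures. Since $M$ is hyperbolizable with non-empty, incompressible, atoroidal boundary and is not an $I$-bundle, $\mathrm{int}(M)$ admits convex-cocompact hyperbolic structures, and by the Ahlfors--Bers--Marden--Sullivan--Thurston theory these are parametrized by the Teichm\"uller space $\mathcal{T}(\partial M)$; it suffices to test the hypothesis $\ell_{C'}(x) = \ell_C(x)$ against these structures and against certain geometrically finite limits of them. Fix a simple closed curve $c$ on $\partial M$ with $C$ freely homotopic in $M$ to $c$ (we may assume $c$ is essential, since otherwise $C$, hence $C'$, is trivial and the conclusion is immediate). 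Fix a pants decomposition of $\partial M$ containing $c$, and let $x_t$, $t \in [0,\infty)$, be the path in $\mathcal{T}(\partial M)$ that holds all Fenchel--Nielsen coordinates of this decomposition fixed except the length coordinate of $c$, which is sent to $0$ as $t \to \infty$.

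The key geometric input is that this path converges, algebraically and geometrically, to a geometrically finite hyperbolic structure $x_\infty$ on $\mathrm{int}(M)$ whose only parabolic locus is a single rank-one cusp associated to $c$; this is a standard pinching result for geometrically finite Kleinian groups, available because $c$ is essential and simple on the incompressible boundary and is not already peripheral. Since geodesic length is continuous under algebraic convergence, $\ell_C(x_t) = \ell_{x_t}(c) \to \ell_{x_\infty}(c) = 0$, and so by hypothesis $\ell_{C'}(x_t) \to 0$ as well. Now a geometrically finite hyperbolic $3$-manifold has positive systole, so $\ell_{x_\infty}(\gamma) > 0$ for every class $\gamma$ not conjugate into the cusp subgroup; applying continuity of length once more to the fixed class $C'$, we conclude that $C'$ \emph{is} conjugate into the cusp subgroup of $x_\infty$. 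That subgroup is infinite cyclic, generated by a primitive parabolic $p$, with $c$ freely homotopic to $p^{j}$ for some $j \ge 1$ and $C'$ freely homotopic to $p^{m}$ for some $m \ne 0$.

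To finish, note that for every hyperbolic structure $x$ the geodesic length of a power of a class is the corresponding multiple of the geodesic length of its primitive root, so $\ell_C(x) = j\,\ell_x(p)$ and $\ell_{C'}(x) = |m|\,\ell_x(p)$; evaluating $\ell_{C'}(x) = \ell_C(x)$ at any $x$ with $\ell_x(p) > 0$, for instance at any point of the original convex-cocompact family (where $p$ is loxodromic), forces $|m| = j$. Hence $C'$ is freely homotopic to $p^{\pm j}$, that is, to $c$ up to orientation, that is, to $C$, so $C' = C$. The exclusion of $I$-bundles enters here only in spirit: such manifolds are surface-like, and the symmetry phenomenon responsible for the exceptional case of Theorem \ref{surface separating} can persist for them, so the clean statement is not expected to hold for $I$-bundles without an analogous exception.

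The step I expect to be the main obstacle is the geometric input of the second paragraph: establishing that degenerating the single curve $c$ on the conformal boundary yields a hyperbolic structure on $\mathrm{int}(M)$ that is again geometrically finite with exactly one new cusp, and choosing the path carefully enough that no other closed geodesic shrinks along it. This draws on the deformation theory of geometrically finite hyperbolic $3$-manifolds --- the existence and geometric finiteness of the pinched limit, and the dichotomy that a closed geodesic either persists with positive length in the algebraic limit or is absorbed into a cusp --- and it is where incompressibility and atoroidality of $\partial M$, which make $\mathrm{int}(M)$ hyperbolizable and the parametrization by $\mathcal{T}(\partial M)$ valid, are genuinely used. The remaining steps are soft, relying only on continuity of length functions and the elementary arithmetic of lengths of powers.
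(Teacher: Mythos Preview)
Your approach is genuinely different from the paper's, and the step you flag as the main obstacle is indeed the crux of the matter.

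The paper does not attempt to prove that the pinching sequence in $\mathcal{CC}_0(\pi_1(M))$ converges, nor that any limit is geometrically finite with a single cusp. In fact, the paper explicitly remarks (after Proposition~\ref{maskit-squeezing}) that Maskit's convergence result ``requires additional hypotheses on $M$'', and deliberately uses only the weaker conclusion that $\ell_C(\rho_n)\to 0$. Instead, the paper spends Section~\ref{joinery} building a compact, hyperbolizable, \emph{acylindrical} $3$-manifold $N$ containing $M$ as an incompressible submanifold, engineered so that $C$ remains freely homotopic to a simple curve on $\partial N$ while any $C'\ne C$ is \emph{not} freely homotopic into $\partial N$ at all (Proposition~\ref{final capping off}). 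In $N$, Thurston's compactness theorem for acylindrical manifolds (Theorem~\ref{thurston compactness}) gives convergent subsequences for free, and the analysis of the limit (Lemma~\ref{acylindrical-bound-element}) yields a uniform lower bound $\ell_{C'}\ge K>0$ over $\mathcal{CC}(\pi_1(N))$. Restricting to $\pi_1(M)$ then gives the contradiction. The topological joinery is thus precisely a device to manufacture compactness and to sidestep the convergence question you are relying on.

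Your argument, by contrast, needs: (i) that pinching a single simple curve $c$ on $\partial M$ produces an algebraically convergent sequence; (ii) that the limit $x_\infty$ is discrete and faithful; and (iii) that the \emph{only} parabolic conjugacy class in $x_\infty$ is that of $C$. Point (ii) is genuinely standard (J\o rgensen), but (i) and especially (iii) are not, and are delicate exactly when $M$ contains essential annuli meeting $c$: in that situation $(M,\{c\})$ fails the pared-manifold annulus condition, strong convergence can fail, and one must argue separately that no accidental parabolics appear in the algebraic limit. These facts are provable with enough deformation-theoretic machinery (Maskit, Ohshika, and ultimately the density theorem), but they are not soft, and calling them ``standard'' understates what is being invoked. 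If you intend to run this argument, you should state precisely which convergence theorem you are using and verify its hypotheses for general (non-acylindrical) $M$.

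One minor correction: your explanation of the $I$-bundle exclusion is off. For \emph{simple} curves there is no exceptional case even on surfaces (Theorem~\ref{mcshane surface}); the genus-two exception in Theorem~\ref{surface separating} arises only at self-intersection number one. The paper excludes $I$-bundles simply because its joinery construction does not apply to them directly, and notes that the $I$-bundle case follows from the surface discussion.
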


We first prove a restricted version of Theorem \ref{separating-by-length} for acylindrical $3$-manifolds $M$ and then use topological arguments to handle the general case.  

We would like to thank the referee for their careful reading of the paper and their comments, and in particular for pointing out that a mistake in the statement and proof in the original version of Theorem \ref{one-self-intersection}, which led to the current phrasing of this Theorem and the Lemmas leading up to this statement, in particular Lemma \ref{punctured torus}.

\section{Curves on surfaces}
\label{curves on surfaces}

We begin by stating Horowitz's original result.  Recall that a {\em primitive element} of a (finitely generated) free group $F$ is an element that belongs to a free basis of $F$, where a {\em free basis} for a (finitely generated) free group is a generating set of minimal cardinality.  We wish to consider all realisations of $F$ as a subgroup of ${\rm SL}_2 ({\mathbb C})$, so we define the representation space
\[ {\cal R}(F) = \{ \rho: F\rightarrow {\rm SL}_2({\mathbb C})\: | \: \rho\mbox{ is a homomorphism} \}, \]
with the natural topology induced by choosing a free basis $\{ f_1,\ldots, f_r\}$ for $F$ and realizing ${\cal R}(F)$ as a subset of $({\rm SL}_2({\mathbb C}))^r$ via the map $\rho \mapsto (\rho(f_1),\ldots, \rho(f_r))$.  

Each non-trivial element $f\in F$ induces a {\em character}, which is the function $\chi[f]: {\cal R}(F)\rightarrow {\mathbb C}$ given by 
\[ \chi[f](\rho) = {\rm tr}(\rho(f)). \]
In his original paper, Horowitz \cite{horowitz} proved the following result for (finitely generated) free groups.

\begin{theorem} [see Theorem 7.1 and Corollary 7.2 of Horowitz \cite{horowitz}] Let $u$ be an element of a free group $F$.  If $\chi[u] = \chi[a^m b^n]$, where $m$ and $n$ are integers (allowing the possibility that either $m$ or $n$ be $0$) and $a$ and $b$ are primitive elements of $F$, then $u$ is conjugate to $(a^m b^n)^{\pm 1}$.
\label{horowitz primitive}
\end{theorem}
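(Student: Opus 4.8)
The plan is to reduce the statement to the classical rigidity of the character of a word in a free group of rank at most two and to bootstrap from there; this is essentially the route by which Horowitz \cite{horowitz} derives his Corollary 7.2 from his Theorem 7.1.

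First I would make some harmless reductions. Each of $u$, $a$, $b$ lies in a finitely generated free factor of $F$, in which $a$ and $b$ remain primitive, so we may assume $F$ has finite rank $n$. Since $a$ is primitive, there is an automorphism $\phi$ of $F$ carrying the first member of a fixed free basis to $a$; as $\rho \mapsto \rho\circ\phi$ is a bijection of ${\cal R}(F)$ and $\phi$ preserves both conjugacy and inversion, composing with $\phi$ leaves the hypothesis and the conclusion intact, so we may assume $a$ is itself a basis element of $F$. If $m = 0$ or $n = 0$, or if $a$ and $b$ commute (which forces $a = b^{\pm 1}$), then $a^m b^n$ is a power $c^k$ of a single primitive element $c$; here $\chi[c^k]$ is a Chebyshev-type polynomial in ${\rm tr}\,\rho(c)$, so the hypothesis makes ${\rm tr}\,\rho(u)$ a function of ${\rm tr}\,\rho(c)$ alone, and examining the growth of ${\rm tr}\,\rho(u)$ as the eigenvalue of a diagonal $\rho(c)$ tends to infinity forces $u$ to be conjugate to $c^{\pm k}$. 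So from now on $\{a, b\}$ freely generates a subgroup $F' = \langle a, b\rangle$ of rank two, since any two-element generating set of a rank-two free group is automatically a free basis.

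Next I would transfer the problem to trace polynomials. Because $a$ is a basis element and $b$ is a non-trivial word, a standard density argument (a relative form of Borel's theorem on word maps) shows that the map $\rho \mapsto ({\rm tr}\,\rho(a),{\rm tr}\,\rho(b),{\rm tr}\,\rho(ab))$ from ${\cal R}(F) \cong {\rm SL}_2({\bf C})^n$ to ${\bf C}^3$ is dominant. By the classical theory of the character variety of a rank-two free group, $\chi[a^m b^n]$ is a polynomial with integer coefficients in ${\rm tr}\,\rho(a)$, ${\rm tr}\,\rho(b)$ and ${\rm tr}\,\rho(ab)$; combined with the hypothesis and the dominance just noted, this forces ${\rm tr}\,\rho(u)$ to be the same polynomial in these three traces, that is, ${\rm tr}\,\rho(u)$ depends on $\rho$ only through the character of $\rho$ restricted to $F'$. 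It remains to prove the rigidity statement: a word $u \in F$ whose trace function is the polynomial in ${\rm tr}\,\rho(a)$, ${\rm tr}\,\rho(b)$, ${\rm tr}\,\rho(ab)$ associated to the cyclically reduced word $a^m b^n$ is conjugate to $(a^m b^n)^{\pm 1}$.

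I would establish this rigidity in two steps. First, that $u$ is conjugate into $F'$: feeding diagonal matrices with independent transcendental eigenvalues into ${\rm tr}\,\rho(u)$ produces a Laurent polynomial whose support, read off against the known support for $a^m b^n$, constrains the reduced form of $u$ in every basis enough to force a conjugate of $u$ into $\langle a, b\rangle$. Second, within $F' \cong F_2$ one uses the classical fact that the trace polynomial of a cyclically reduced word of $F_2$ in $({\rm tr}\,a,{\rm tr}\,b,{\rm tr}\,ab)$ determines that word up to cyclic permutation and inversion; since $a^m b^n$ with $m, n \ne 0$ is reduced and cyclically reduced, its cyclic word is pinned down and $u$ is conjugate to $(a^m b^n)^{\pm 1}$ in $F'$, hence in $F$. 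I expect this last rigidity step to be the main obstacle, and within it the delicate point is the reduction that $u$ is actually conjugate into the rank-two subgroup $F'$, which need not be a free factor of $F$; here one must exploit the full trace function rather than merely abelianized exponent data, which is exactly what makes the whole statement a genuine rigidity phenomenon of characters into ${\rm SL}_2({\bf C})$.
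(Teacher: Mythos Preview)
Your route is genuinely different from the paper's.  The paper (following Horowitz) does not pass through the abstract character variety or the Fricke trace polynomial in $({\rm tr}\,a,{\rm tr}\,b,{\rm tr}\,ab)$ at all.  Instead it fixes the concrete one--parameter family
\[
\rho(a)=\begin{pmatrix}\lambda & T\\ 0 & \lambda^{-1}\end{pmatrix},\qquad
\rho(b)=\begin{pmatrix}\mu & 0\\ T & \mu^{-1}\end{pmatrix},
\]
and computes that for any word $w=a^{m_1}b^{n_1}\cdots a^{m_p}b^{n_p}$ the trace ${\rm tr}\,\rho(w)$ is a polynomial in $T$ of degree exactly $2p$, with leading coefficient the product of the ``Chebyshev factors'' $(\lambda^{m_j}-\lambda^{-m_j})(\mu^{n_j}-\mu^{-n_j})/((\lambda-\lambda^{-1})(\mu-\mu^{-1}))$.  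Equality of characters then forces equality of $T$--degree (hence of syllable length $p$) and of this leading coefficient, which for $w=a^m$ or $w=a^m b^n$ immediately pins down $u$ up to conjugacy and inversion.  Your approach trades this explicit computation for structural facts about $X(F_2)$; it is cleaner in spirit but relies on the classical ``trace polynomial determines the cyclic word'' statement, which itself is usually proved by a calculation of exactly the same flavour as the paper's.

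The gap you flag is real, and it is precisely where the explicit approach earns its keep.  Your plan to show that $u$ is conjugate into $F'=\langle a,b\rangle$ by ``feeding diagonal matrices and reading off Laurent support'' cannot work as stated: diagonal representations factor through the abelianisation, so they see only the exponent--sum vector of $u$, and plenty of words with the correct exponent sums are not conjugate into $F'$.  The paper's family, by contrast, is not simultaneously diagonalisable, and the $T$--degree genuinely records the syllable length of $u$ as a word in the chosen basis, which is strictly finer information than anything abelian.  If you want to repair your argument, you will need a non--abelian family of test representations of $F$ (not just of $F'$) along these lines; once you do that you are essentially reproducing Horowitz's computation rather than avoiding it.  Note also that your dominance claim for $\rho\mapsto({\rm tr}\,\rho(a),{\rm tr}\,\rho(b),{\rm tr}\,\rho(ab))$ is not automatic when $\langle a,b\rangle$ fails to be a free factor of $F$, since a representation of $F'$ need not extend to $F$; some argument using the primitivity of $b$ is required here as well.
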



It is unclear what is the cleanest algebraic generalisation of Theorem \ref{horowitz primitive} to more general words in a finitely generated free group $F$.   Along these lines, we highlight the work of Ginzburg and Rudnick \cite{ginzburg rudnick}, who develop the following condition.    Let $w = a^{m_1} b^{n_1} \cdots a^{m_p} b^{n_p}$ be any word in the free group $F = {\rm free}(a,b)$ of rank two.   They first observe that the word $w' = I(w) =  b^{n_p} a^{m_p} \cdots b^{n_1} a^{m_1}$ obtained by writing $w$ backwards has the same character as $w$; that is, $\chi[w] = \chi[I(w)]$ for all $w \in F$.  Define the vector ${\mathbb R} = (r_1,\ldots, r_p)$ of non-zero integers to be {\em non-singular} if $r_k \ne \sum_{j\in S} r_j$ for every $1\le k\le p$ and every subset $S\subset \{ 1,\ldots, p\}$, $S\ne \{ k\}$.  They then show that if both the exponent vector $(m_1,\ldots, m_p)$ for the powers of $a$ in $w$ and the exponent vector $(n_1,\ldots, n_p)$ for the powers of $b$ are non-singular, then (up to inverse and conjugacy) the only possible word with the same character as $w$ is $I(w)$.

At this point, we shift our focus to the analogous question for lengths of curves rather than characters.   Let $\Sigma$ be a compact, orientable surface of negative Euler characteristic, possibly with boundary, whose interior $S = {\rm int}(\Sigma)$ then admits a {\em hyperbolic structure}, by which we mean a complete Riemannian metric of curvature $-1$, possibly of infinite area; where relevant, we let $S_x$ denote $S$ equipped with the hyperbolic structure $x$.  

A non-trivial element $c\in \pi_1(S)$ is {\em maximal} if it is not the proper power of another element of $\pi_1(S)$, or equivalently if $\langle c\rangle$ is a maximal cyclic subgroup of $\pi_1(S)$.  (We will participate in the standard abuse of notation and normally repress the base-point when discussing fundamental groups.)  We note that every non-trivial element of $\pi_1(S)$ is either maximal or a proper power of a maximal element, as elements of the fundamental group of a surface are not infinitely divisible. 

A {\em curve} $C$ on $S$ is the free homotopy class corresponding to a maximal element $c\in \pi_1 (S) = \pi_1(\Sigma)$, so that in particular we are explicitly excluding proper powers $c^n$ for $|n|\ge 2$ in this definition.    We work throughout with {\em unoriented curves}, so that for a maximal element $c\in \pi_1(S)$ with its corresponding curve $C$, we have that $C$ is also the curve corresponding to $c^{-1}$.  Since curves are free homotopy classes of elements of $\pi_1(S)$, the curves on $S$ naturally correspond to conjugacy classes of maximal cyclic subgroups of $\pi_1(S)$. 

A curve $C$ on $S$ is {\em simple} if it contains a simple representative, by which we mean a non-self-intersecting, closed loop.  Otherwise, if no such simple representative exists, the curve $C$ is {\em non-simple}.   A curve $C$ on $S\subset \Sigma$ is {\em peripheral} if $C$ is freely homotopic to a component of $\partial \Sigma$.   Note that peripheral curves are necessarily simple.

For a non-simple curve $C$, the {\em self-intersection number} of $C$ is the minimum number of times any representative in the free homotopy class $C$ intersects itself. 

Two curves $C$ and $C'$ are {\em disjoint} if they contain disjoint representatives; otherwise, no such disjoint representatives exist and the curves {\em intersect}.   Note that a peripheral curve on $S$ is necessarily disjoint from every other curve on $S$.

A useful property of hyperbolic structures is that for each non-peripheral curve $C$ on $S$ and each hyperbolic structure $x$ on $S$, there exists a unique closed geodesic $C^\ast$ on $S_x$ in the free homotopy class $C$.  Moreover, the self-intersection number of the free homotopy class $C$ is  realised by the self-intersection number of the geodesic representative $C^\ast$ of $C$. 

The situation for peripheral curves is slightly more complicated.  Namely, there are two possible types of end for $S$ with a given hyperbolic structure $x$.   One type of end is a {\em funnel}, so that there exists a simple, closed geodesic bounding an exponentially flaring end of $S_x$ homeomorphic to an annulus.  Such an end exists if and only if $S_x$ has infinite area.  The {\em convex core} of the surface $S_x$ is the result of cutting $S_x$ along the simple closed geodesics bounding the funnel ends and removing the open funnels.

The other type of end is a {\em cusp}, which is conformally equivalent to a punctured disc.  For a peripheral curve $C$ homotopic into a cusp on $S_x$, there is no closed geodesic in the free homotopy class $C$; instead, there exist a sequence of representatives of $C$ whose lengths on $S_x$ go to zero.   The cusp ends of a hyperbolic surface are contained in the convex core of the surface. 

Phrased in terms of geodesics, two non-peripheral curves $C_0$ and $C_1$ on $S$ intersect if and only if their geodesic representatives $C_0^\ast$ and $C_1^\ast$ intersect for some, and hence every, hyperbolic structure on $S$.  

We now consider the space of {\em marked hyperbolic structures} on $S$.  A {\em Fuchsian group} $\Phi$  is a discrete subgroup of the group ${\rm Isom}^+({\mathbb H}^2) \cong {\rm PSL}_2({\mathbb R})$ of orientation-preserving isometries of the real hyperbolic plane ${\mathbb H}^2$.  Let ${\cal D}(\pi_1 (S))$ be the space of realisations of $\pi_1(S)$ as a Fuchsian group, so that 
\begin{eqnarray*}
{\cal D}(\pi_1 (S)) & =   \{ & \rho: \pi_1 (S)\rightarrow {\rm PSL}_2({\mathbb R})\: |\: \rho(\pi_1 (S))\mbox{ is Fuchsian} \\ 
& & \mbox{with quotient }{\mathbb H}^2/\rho(\pi_1(S))\mbox{ homeomorphic to }S \}. 
\end{eqnarray*}
(The restriction that ${\mathbb H}^2/\rho(\pi_1(S))$ be homeomorphic to $S$ is necessary here as we may be in the case that $S$ is not closed, in which case $\pi_1(S)$ is free and there may be multiple topological types of such surfaces with isomorphic fundamental groups.)  Where appropriate, we let $S_\rho$ be the surface $S$ with the hyperbolic structure coming from the representation $\rho$, so that $S_\rho = {\mathbb H}^2/\rho(\pi_1(S))$. 

Each hyperbolic structure on $S$ arises from a representation  $\rho\in{\cal D}(\pi_1 (S))$, which yields both the hyperbolic structure by taking the quotient $S_\rho = {\mathbb H}^2/\rho(\pi_1(S))$, together with the marking of $\pi_1 (S)$ by $\rho$, which allows us to distinguish between curves.  To each curve $C$ on $S$, we associate the function 
\[ \ell_C: {\cal D}(\pi_1(S))\rightarrow {\mathbb R} \]
given by setting $\ell_C (\rho)$ to be the length of the closed geodesic $C^\ast$ corresponding to the free homotopy class $C$ on $S_\rho = {\mathbb H}^2/\rho(\pi_1(S))$.  

If there is no closed geodesic in the free homotopy class $C$, in the case $C$ is peripheral  on $S$ and homotopic to a cusp of $S_\rho$, or equivalently when $\rho(C)$ is a parabolic cyclic conjugacy class, we set $\ell_C (\rho) = 0$.  By the above discussion, the function $\ell_C$ is well-defined.   

We use the following two important results about the behavior of the lengths of curves on hyperbolic surfaces.  The first is a consequence of the Collar Lemma for hyperbolic surfaces.

\begin{lemma} [see Corollary 4.1.2 of Buser \cite{buser-book}] Let $S$ be the interior of a compact, orientable surface of negative Euler characteristic.  Let $C$ be a simple curve on $S$ and let $C'$ be a curve on $S$ that intersects $C$.  We then have that 
\[ \sinh\left( \frac{1}{2} \ell_C (\rho) \right) \: \sinh\left( \frac{1}{2} \ell_{C'} (\rho) \right) > 1 \]
for every $\rho\in {\cal D}(\pi_1(S))$.
\label{collar lemma}
\end{lemma}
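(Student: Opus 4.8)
The plan is to deduce the inequality directly from the Collar Theorem (Theorem 4.1.1 of Buser \cite{buser-book}); indeed this is essentially its proof. First I would clear away the degenerate cases: since a peripheral curve is disjoint from every other curve on $S$, the hypothesis that $C'$ intersects $C$ forces both $C$ and $C'$ to be non-peripheral. Hence for each $\rho \in {\cal D}(\pi_1(S))$ the free homotopy classes $C$ and $C'$ are represented by genuine closed geodesics $C^\ast$ and $C'^\ast$ on $S_\rho$, with $C^\ast$ simple (as $C$ is simple). Because $C$ and $C'$ intersect, $C'^\ast$ meets $C^\ast$; since $C^\ast$ is embedded this forces $C'^\ast \ne C^\ast$, so the two distinct closed geodesics meet transversally in a nonempty finite set, and I fix a crossing point $p \in C^\ast \cap C'^\ast$.

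Next I would invoke the Collar Theorem to produce about the simple closed geodesic $C^\ast$ an embedded collar ${\cal C} = \{\, q \in S_\rho : {\rm dist}(q, C^\ast) \le w \,\}$, an embedded annulus with core $C^\ast$ whose two boundary circles lie at distance exactly $w$ from $C^\ast$, where $w$ is determined by $\sinh(w)\,\sinh\!\left(\frac{1}{2} \ell_C(\rho)\right) = 1$. The key observation is that $C'^\ast$ is not contained in ${\cal C}$: a closed geodesic lying inside the annulus ${\cal C}$ would have to be its core (lifting to ${\bf H}^2$, such a geodesic is invariant under a power of the hyperbolic element corresponding to $C^\ast$, hence is the axis), which is impossible since $C'^\ast \ne C^\ast$. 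Therefore $C'^\ast$ exits ${\cal C}$, and does so through both boundary circles, one on each side of $C^\ast$.

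Finally I would run the length estimate. Following $C'^\ast$ away from $p$ in either direction, its distance to $C^\ast$ increases strictly: upon lifting to ${\bf H}^2$, the distance from the axis of $C^\ast$ to a point moving along a lift of $C'^\ast$ is a strictly convex function attaining its minimum value $0$ at the lift of $p$, since two distinct geodesics of ${\bf H}^2$ cross at most once. Consequently the subarc $\alpha \subset C'^\ast$ that contains $p$ and lies in ${\cal C}$ runs from one boundary circle of ${\cal C}$ to the other, and each of its two halves — from $p \in C^\ast$ out to $\partial {\cal C}$ — joins a point of $C^\ast$ to a point at distance $w$ from $C^\ast$, hence (by the triangle inequality for ${\rm dist}(\cdot, C^\ast)$) has length at least $w$; thus ${\rm length}(\alpha) \ge 2w$. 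As $\alpha$ is an arc with two distinct endpoints it is a proper subarc of the closed geodesic $C'^\ast$, so $\ell_{C'}(\rho) > {\rm length}(\alpha) \ge 2w$. Since $\sinh$ is strictly increasing, $\sinh\!\left(\frac{1}{2}\ell_{C'}(\rho)\right) > \sinh(w) = 1/\sinh\!\left(\frac{1}{2}\ell_C(\rho)\right)$, which on cross-multiplying is the claimed strict inequality, valid for every $\rho \in {\cal D}(\pi_1(S))$.

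The only real obstacle is the middle step — the embeddedness of the collar together with the assertion that $C'^\ast$ cannot be absorbed into it — and this is exactly what the Collar Theorem supplies, so at this level of detail the lemma is immediate. The remainder (monotonicity of the distance-to-core function along a geodesic transverse to the core, and the upgrade from the non-strict to the strict inequality via the remark that a closed geodesic has no endpoint, so $\alpha \ne C'^\ast$) is routine.
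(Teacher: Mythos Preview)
Your argument is correct and is essentially the standard derivation of this inequality from the Collar Theorem. Note that the paper does not supply its own proof of this lemma at all: it simply quotes the statement as Corollary~4.1.2 of Buser \cite{buser-book} and moves on, so there is nothing to compare against beyond observing that your proof is precisely the one underlying Buser's corollary --- embed a collar of half-width $w$ with $\sinh(w)\sinh(\tfrac{1}{2}\ell_C(\rho))=1$ about the simple geodesic $C^\ast$, note that the transverse geodesic $C'^\ast$ must cross the full collar and hence pick up length strictly exceeding $2w$, and rearrange.
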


The main use we make of the Collar Lemma is to show that two intersecting curves cannot both be represented by short geodesics  in any hyperbolic structure on the surface.   

The second result concerns the behavior of non-simple curves.  We state the version most relevant to the discussion at hand, though we do note that this result has been considerably extended.   For this, we refer the interested reader in particular to the papers of Basmajian \cite{basmajian-stable}, \cite{basmajian universal}.

\begin{lemma} [see for instance Hempel \cite{hempel lower}]  There exists a constant $K>0$ so that if $S$ is the interior of a compact, orientable surface of negative Euler characteristic and if $C$ is a non-simple curve on $S$, then $\ell_C (\rho)\ge K$ for all $\rho\in {\cal D}(\pi_1(S))$. 
\label{hempel}
\end{lemma}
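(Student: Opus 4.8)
The plan is to reduce the statement to the Margulis Lemma for the hyperbolic plane, whose content is precisely the existence of a universal constant of this kind. First I would record the topological input. Since $C$ is non-simple it is in particular non-peripheral (peripheral curves are simple), so for every $\rho \in \mathcal{D}(\pi_1(S))$ there is a genuine closed geodesic representative $C^\ast$ of $C$ on $S_\rho$, and $C^\ast$ has a self-intersection point $p$, by the characterization of intersection in terms of geodesic representatives quoted above. Fix such a $\rho$, write $\Phi = \rho(\pi_1(S))$, a discrete, torsion-free subgroup of ${\rm PSL}_2({\bf R})$, and set $\ell = \ell_C(\rho)$, the length of $C^\ast$.

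Next I would lift the picture to ${\bf H}^2$. Choose a lift $q \in {\bf H}^2$ of the self-intersection point $p$. Because $p$ is a transverse self-crossing of $C^\ast$, two distinct geodesic strands of $C^\ast$ pass through $p$, and these lift to two distinct complete geodesics $L_1 \neq L_2$ through $q$, each of which is a full lift of the closed geodesic $C^\ast$. Each $L_i$ is therefore the axis of a hyperbolic element $g_i \in \Phi$ conjugate to $\rho(c)$, where $c$ is the element represented by $C$; in particular each $g_i$ has translation length exactly $\ell$. The decisive observation is that $q$ lies on the axis $L_i$ of $g_i$, so the displacement of $q$ equals the translation length: $d(q, g_i q) = \ell$ for $i = 1, 2$. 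Moreover $L_1$ and $L_2$ cross at $q$, so $g_1$ and $g_2$ have distinct, linked axes and hence no common fixed point on $\partial {\bf H}^2$; thus $\langle g_1, g_2\rangle$ is non-elementary and $g_1, g_2$ do not commute.

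I would then invoke the Margulis Lemma: there is a universal constant $\epsilon > 0$, depending only on the dimension, such that for any discrete, torsion-free $\Gamma \leq {\rm Isom}^+({\bf H}^2)$ and any point $q$, the elements of $\Gamma$ displacing $q$ by less than $\epsilon$ generate an elementary (virtually cyclic, hence here cyclic) subgroup. Apply this with $\Gamma = \Phi$ and the point $q$ above. If $\ell < \epsilon$, then both $g_1$ and $g_2$ displace $q$ by less than $\epsilon$, so they lie in an elementary subgroup and in particular share an axis, contradicting $L_1 \neq L_2$. Hence $\ell \geq \epsilon$, and we may take $K = \epsilon$, which is independent of $S$, of $C$, and of $\rho$.

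The point that needs the most care, and where the uniformity the statement demands really comes from, is that the bound must not depend on the crossing angle of $L_1$ and $L_2$ at $q$. A naive two-generator estimate, for instance via Jørgensen's inequality after computing ${\rm tr}[g_1,g_2]$ in coordinates with $q = i$ and the axes placed symmetrically about the imaginary axis, produces a bound that a priori degenerates as the crossing angle tends to $0$, so one would then have to rule out near-tangential crossings separately. Packaging the argument through the Margulis constant avoids this entirely, since that constant is universal and the only geometric input used is the angle-free identity $d(q, g_i q) = \ell$ coming from placing the base point at the self-intersection. If one instead wants a self-contained argument, the same configuration can be fed into Jørgensen's inequality and the crossing-angle dependence analyzed by hand; checking uniformity in the small-angle limit is then the one genuinely substantive computation.
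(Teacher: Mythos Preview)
The paper does not actually prove this lemma; it is stated with a citation to Hempel \cite{hempel-lower} and used as a black box, so there is no in-paper proof to compare against. Your argument is correct and is one of the standard routes to the result: the self-intersection of $C^\ast$ yields two hyperbolic elements $g_1,g_2$ with distinct axes through the lifted crossing point $q$, both displacing $q$ by exactly $\ell$, and the Margulis Lemma then forces $\ell$ to be at least the two-dimensional Margulis constant, independent of $S$, $C$, and $\rho$. Your remark that placing the basepoint on both axes makes the displacement equal to the translation length, and hence eliminates any dependence on the crossing angle, is exactly the point that makes the argument uniform.

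For comparison, Hempel's original approach is more algebraic: he works directly with traces in ${\rm SL}_2({\bf R})$, uses trace identities for words in two generators, and extracts an explicit numerical lower bound (of the order of $2\operatorname{arcosh}(3/2)$, roughly $1.9$). That buys an explicit constant and avoids invoking Margulis as a black box; your route is shorter and conceptually transparent but does not name the constant. Either is a complete proof of the lemma as stated.
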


The following result is due to McShane \cite{mcshane homeomorphisms}, and is in a real sense the starting point for the investigations detailed in this note.  A proof for closed surfaces can also be found as Lemma 6.2 in Anderson \cite{anderson survey}.  For the sake of completeness, and because some of the arguments herein arise again in later arguments, we outline the general proof here.

\begin{theorem} Let $S$ be the interior of a compact, orientable surface of negative Euler characteristic.  Let $C$ be a simple curve on $S$.  If $C'$ is a curve on $S$ that satisfies $\ell_C (\rho) = \ell_{C'} (\rho)$ for all $\rho\in {\cal D}(\pi_1(S))$, then $C' =  C$. 
\label{mcshane surface}
\end{theorem}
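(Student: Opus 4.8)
The plan is to separate the argument according to whether the simple curve $C$ is peripheral (homotopic to a cusp or a funnel-bounding geodesic) or non-peripheral, and in each case to exploit the fact that a curve $C'$ with $\ell_{C'} = \ell_C$ is constrained both by the universal lower bound for lengths of non-simple curves (Lemma~\ref{hempel}) and by the Collar Lemma (Lemma~\ref{collar lemma}). First I would observe that for every hyperbolic structure $\rho$, the geodesic $C^\ast$ is simple, hence $C^\ast$ is disjoint from its own copies and we may pinch: I would use a sequence of hyperbolic structures $\rho_n$ on $S$ in which $\ell_C(\rho_n) \to 0$ (such structures exist precisely because $C$ is simple and non-peripheral, so $S$ cut along $C$ still carries hyperbolic structures, or because $C$ bounds a funnel one can shrink; if $C$ is peripheral and homotopic to a cusp then $\ell_C \equiv 0$ already). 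Along such a sequence $\ell_{C'}(\rho_n)\to 0$ as well, so by Lemma~\ref{hempel} the curve $C'$ must be simple, and moreover $C'$ is not a non-peripheral curve whose length stays bounded below on the pinching sequence, which forces $C'$ to be disjoint from $C$: otherwise Lemma~\ref{collar lemma} gives $\sinh(\tfrac12\ell_C(\rho_n))\sinh(\tfrac12\ell_{C'}(\rho_n)) > 1$, and since $\ell_{C'}$ is uniformly bounded above on any fixed compact family while we can also run the pinch so that $\ell_C(\rho_n)\to 0$ with $\ell_{C'}$ staying bounded, we contradict the inequality.

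Once both $C$ and $C'$ are known to be disjoint simple curves with equal length functions, I would finish by a Fenchel--Nielsen / Dehn-twist argument. Complete $C$ to a pants decomposition (or, if $C$ is peripheral, note that its length function is identically zero and then $C'$ is a simple curve whose length is identically zero, forcing $C'$ to be peripheral and homotopic to the same end, since distinct ends have independent cusp-pinching deformations). For non-peripheral $C$: since $C$ and $C'$ are disjoint simple curves, they lie in a common pants decomposition, and the associated Fenchel--Nielsen length and twist parameters are global coordinates on $\mathcal{D}(\pi_1(S))$ (Teichm\"uller space together with the funnel parameters). If $C'\neq C$ then $C'$ is a different curve of that pants decomposition, or $C'$ has nonzero geometric intersection with some curve $D$ of a pants decomposition containing $C$; in either case I can vary the structure so that $\ell_{C'}$ changes while $\ell_C$ is held fixed — concretely, pinch $C'$ (if $C'$ is a pants curve disjoint from $C$) while keeping all other pants curves, including $C$, of fixed length, giving $\ell_{C'}\to 0 \neq \ell_C$; this contradicts $\ell_{C'}=\ell_C$ unless $C' = C$.

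The one delicate point, and the step I expect to be the main obstacle, is ruling out the possibility that $C'$ is simple, disjoint from $C$, but \emph{topologically distinct} from $C$ while still having the same length function — i.e.\ genuinely showing that two disjoint simple curves with identical length functions must coincide. This is where one uses that on a surface the marked length spectrum restricted to a single simple closed curve still ``sees'' that curve: embedding both curves into a pants decomposition and using the independence of the Fenchel--Nielsen coordinates, one can deform so as to change $\ell_{C'}$ alone. Some care is needed in the non-compact cases (funnels and cusps), since then $\mathcal{D}(\pi_1(S))$ includes extra ``boundary'' moduli; there one simply augments the pants decomposition coordinates by the geodesic boundary lengths, and the same independence argument applies. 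With this the proof is complete. \hfill ${\bf QED}$ \\ \vspace{.15cm}
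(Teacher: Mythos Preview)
Your proposal is correct and follows essentially the same approach as the paper's proof: pinch $C$ to zero length along a sequence $\{\rho_n\}$, invoke Lemma~\ref{hempel} to force $C'$ to be simple, invoke Lemma~\ref{collar lemma} to force $C'$ to be disjoint from $C$, and then use the independence of Fenchel--Nielsen coordinates on a pants decomposition containing both curves to vary one length against the other. The only cosmetic differences are that you treat the peripheral case separately (the paper absorbs it into the general argument) and that in the final step you pinch $C'$ while holding $\ell_C$ fixed, whereas the paper sends $\ell_C\to 0$ and $\ell_{C'}\to\infty$ simultaneously; both are equally valid uses of the same coordinate independence.
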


\begin{proof} Since $C$ is simple, there exists a sequence $\{ \rho_n\}\subset {\cal D}(\pi_1(S))$ so that $\ell_C (\rho_n)\rightarrow 0$.  One standard way of constructing such a sequence is to first find a pants decomposition of $S$ containing $C$; by a {\em pants decomposition}, we mean a collection $P$ of disjoint, simple curves on $S$ so that each component of the complement of (a collection of disjoint representatives for the curves in) $P$ in $S$ is homeomorphic to the thrice-punctured sphere.  A pants decomposition gives rise to the set of {\em Fenchel-Nielsen coordinates} on the Teichm\"uller space of $S$, given by the lengths of the geodesic representatives of the curves in $P$ and the twists along which these curves are glued together; see Abikoff \cite{abikoff}. The length of $C$ is then one of the coordinates and can take any value in $(0,\infty)$.  

Assume first that $C'$ is non-simple.  By Lemma \ref{hempel}, there exists a constant $K >0$ so that $\ell_{C'}(\rho)\ge K$ for all $\rho\in {\cal D}(\pi_1(S))$.  However, we have assumed that $\ell_C (\rho_n) = \ell_{C'} (\rho_n)$ for all $n$ and we have from the previous paragraph that $\ell_C (\rho_n)\rightarrow 0$ as $n\rightarrow\infty$, which is a contradiction.  Hence, it must be that $C'$ is simple.

Suppose now that $C$ and $C'$ intersect, and recall that we have a sequence $\{ \rho_n\}\subset {\cal D}(\pi_1(S))$ for which $\ell_C (\rho_n)\rightarrow 0$ as $n\rightarrow\infty$.  By Lemma \ref{collar lemma}, we have that 
\[ \sinh\left( \frac{1}{2} \ell_C (\rho_n) \right) \: \sinh\left( \frac{1}{2} \ell_{C'} (\rho_n) \right) > 1\]
for all $n$ and so $\sinh\left( \frac{1}{2} \ell_{C'} (\rho_n)\right)\rightarrow \infty$ as $n\rightarrow \infty$, and hence that $\ell_{C'} (\rho_n) \rightarrow \infty$, again contradicting the assumption that $\ell_C (\rho_n) = \ell_{C'} (\rho_n)$ for all $n$.

Hence, we have  either that $C = C'$ or that $C$ and $C'$ are disjoint.  However, if $C'$ is disjoint from $C$, there exists a pants decomposition $P'$ containing both $C$ and $C'$.  Using Fenchel-Nielsen coordinates for $P'$, there exists a sequence $\{ \rho_n\}\subset {\cal D}(\pi_1(S))$ for which $\ell_C (\rho_n) \rightarrow 0$ and $\ell_{C'} (\rho_n) \rightarrow\infty$, a contradiction.  Therefore, we must have that $C' = C$, as desired.
\end{proof}

We note here that Theorem \ref{mcshane surface} fails at this level of generality if we restrict our attention to only those hyperbolic metrics of finite area on $S$.   Let $\Sigma$ be any compact, orientable surface with negative Euler characteristic and with at least two boundary components, let $S ={\rm int}(\Sigma)$, and let 
\[ {\cal D}_{\rm finite}(\pi_1 (S)) = \{ \rho\in {\cal D}(\pi_1(S))\: |\:  {\mathbb H}^2/\rho(\pi_1(S))\mbox{ has finite area} \}. \]
Let $C$ and $C'$ be  peripheral curves corresponding to distinct boundary components of $\Sigma$, and note that for all $\rho\in {\cal D}_{\rm finite}(\pi_1(S))$, we have that $\rho(C)$ and $\rho(C')$ are parabolic conjugacy classes corresponding to distinct cusps on $S_\rho$.  In particular, we have that $\ell_C (\rho) = \ell_{C'} (\rho) = 0$ for all $\rho\in {\cal D}_{\rm finite}(\pi_1(S))$, even though $C'\ne C$.  

A natural question to ask is the extent to which the condition of simplicity in Theorem \ref{mcshane surface} can be relaxed.   The difficulty with this question is finding an appropriate condition on curves.  A natural notion of complexity to use is the self-intersection number of $C$.   We will show that curves of self-intersection number one are essentially characterised by their length functions, with one unavoidable but completely describable exception arising from hyperellipticity.  

We build up to the general case gradually, beginning with the case that $\Sigma$ is a pair of pants.  Recall that a {\em pair of pants} is the compact surface whose interior is homeomorphic to the thrice-punctured sphere.

\begin{lemma} Let $S$ be the interior of a pair of pants $\Sigma$.  Let $C$ be a closed curve with self-intersection number one on $S$ and let $C'$ be a curve on $S$ for which $\ell_C (\rho) = \ell_{C'} (\rho)$ for all hyperbolic structures $\rho\in {\cal D}(\pi_1(S))$.  Then $C' = C$.
\label{three holed sphere}
\end{lemma}

\begin{proof} Let $C$ be a curve on $S$ with self-intersection number one.   Up to conjugation in ${\rm PSL}_2 ({\mathbb R})$, there is a unique hyperbolic structure $\rho_0\in {\cal D}(\pi_1(S))$ so that all three ends of ${\mathbb H}^2/\rho_0(\pi_1(S))$ are cusps; the existence of such a hyperbolic structure is standard, arising for instance by doubling an ideal hyperbolic triangle, and the uniqueness essentially  follows from the (oriented) triple-transitivity of the action of ${\rm PSL}_2 ({\mathbb R})$ on the boundary at infinity ${\mathbb S}^1_\infty = \overline{{\mathbb R}}$ of ${\mathbb H}^2$.  We call both the hyperbolic structure $\rho_0$ and the quotient surface ${\mathbb H}^2/\rho_0(\pi_1(S))$ the {\em thrice-punctured sphere}.

We now use the refinement of Lemma \ref{hempel}, proven independently by Hempel \cite{hempel lower}, Nakanishi \cite{nakanishi} and Yamada \cite{yamada1}, \cite{yamada2} (see also Rivin \cite{rivin}), that over all hyperbolic structures over all surfaces of all (allowable) topological types, the  shortest curve is the figure-eight curve on the thrice-punctured sphere (of which there are three).  By a {\em figure-eight} curve on $S$, we mean exactly what the reader would expect, namely the curve formed by going around once around one peripheral curve of $S$ and then around a second, distinct peripheral curve of $S$, crossing itself once in the process.

Since $S$ is a planar surface, it is easy to see (independent of the hyperbolic structure on $S$) that any curve with self-intersection number one is in fact one of the figure-eight curves on $S$.  In particular, the curve $C$ is a figure-eight curve.  We now consider the behavior of $C$ in the previously discussed hyperbolic structure $\rho_0$ on $S$.  By the result mentioned above, $\rho_0(C)$ is a  shortest curve in any hyperbolic structure on any surface.  

Since $\ell_C (\rho_0) = \ell_{C'} (\rho_0)$, we must then have that $C'$ is also a shortest curve on the thrice-punctured sphere, and hence a shortest curve over all hyperbolic structures on all surfaces.  Hence, we have that $C'$ is necessarily a figure-eight curve on the thrice-punctured sphere, and in particular $C'$  has self-intersection number one.  

Suppose that $C\ne C'$, so that $C$ and $C'$ are both figure-eight curves made up of different peripheral curves.  Let $\rho\in {\cal D}(\pi_1(S))$ be a hyperbolic structure for which all the ends of ${\mathbb H}^2/\rho(\pi_1(S))$ are funnels.   (We make this choice so that we can vary the lengths of the peripheral geodesics in the argument that follows.)   Lift the representation $\rho$ to a representation $R: \pi_1(S)\rightarrow {\rm SL}_2({\mathbb R})$; by a {\em lift}, we mean that $R$ satisfies the equation $\Pi\circ R = \rho$, after setting $\Pi: {\rm SL}_2({\mathbb R}) \rightarrow {\rm PSL}_2({\mathbb R})$ to be the canonical projection.  In this case, such a lift always exists; we refer the interested reader to Kra \cite{kra lifting} for a proof and for a brief survey of the history of independent solutions to this lifting question.  

Fix a basepoint $z_0\in S$ for $\pi_1(S)$ and let $e_1$, $e_2$, and $e_3$ be oriented simple loops based at $z_0$, disjoint except at $z_0$, so that the $e_j$ go around the ends of $S$.  (In particular, after forgetting orientation, the free homotopy classes of $e_1$, $e_2$, and $e_3$ are representatives of the peripheral curves on $S$.)  We can choose the labels and orientations of the $e_j$ so that $e_3 = e_1\cdot e_2$ and that $C = e_1\cdot e_2^{-1}$, and $C' =  e_1\cdot e_3 = e_1^2 \cdot e_2$.  (Here, we are using $\cdot$ to denote the concatenation of loops, read left to right.)

As discussed in Horowitz \cite{horowitz}, the traces ${\rm tr}(R(e_1))$, ${\rm tr}(R(e_2))$, and ${\rm tr}(R(e_3))$ are independent variables.  Using (many times) the following basic properties of trace for $2\times 2$ matrices $A$ and $B$, that ${\rm tr}(A) = {\rm tr}(A^{-1})$, that trace is invariant under conjugation, and that ${\rm tr}(AB^{-1}) = {\rm tr}(A)\: {\rm tr}(B) -{\rm tr}(AB)$, we can express the trace of any word in $A$ and $B$ as a polynomial in ${\rm tr}(A)$, ${\rm tr}(B)$ and ${\rm tr}(AB)$.  

Consider now the value of ${\rm tr}(R(e_1\cdot e_2^{-1})) - {\rm tr}(R(e_1^2 \cdot e_2))$.  Using the properties of trace noted above, we have that
\[ {\rm tr}(R(e_1\cdot e_2^{-1})) - {\rm tr}(R(e_1^2 \cdot e_2)) = ({\rm tr}(R(e_1))- 1)({\rm tr}(R(e_2)) +{\rm tr}(R(e_3))). \]
In particular, we see that by varying ${\rm tr}(R(e_1))$, ${\rm tr}(R(e_2))$, and ${\rm tr}(R(e_3))$ independently, we can ensure that ${\rm tr}(R(e_1\cdot e_2^{-1})) - {\rm tr}(R(e_1^2 \cdot e_2))$ is non-zero.

There is a subtlety here, in that when we lift, we may not necessarily know the sign of the traces of the lifted elements.  The simple solution is to  run through the same argument as just given for ${\rm tr}(R(e_1\cdot e_2^{-1})) + {\rm tr}(R(e_1^2 \cdot e_2))$, showing that
\[ {\rm tr}(R(e_1\cdot e_2^{-1})) + {\rm tr}(R(e_1^2 \cdot e_2)) = ({\rm tr}(R(e_1))+ 1)({\rm tr}(R(e_2)) +{\rm tr}(R(e_3))). \]
Hence, the same argument applies, regardless of the sign that arises in the lifting of $\rho$ to $R$, and so the same conclusion holds.

A straightforward calculation shows that for a hyperbolic element $\gamma$ of ${\rm PSL}_2({\mathbb R})$, the square of the trace ${\rm tr}^2(\gamma)$ of $\gamma$ and the length $\ell(\gamma)$ of the corresponding geodesic ${\rm axis}(\gamma)/\langle\gamma\rangle$ are related by 
\[ {\rm tr}^2(\gamma) = 4\cosh^2\left( \frac{\ell(\gamma)}{2}\right). \]
In particular, since ${\rm tr}^2 (\rho(e_j)) = {\rm tr}^2 (R(e_j))$, we see that elements with equal traces (up to sign) correspond to geodesics with equal lengths and vice versa.

Since the lengths $\ell_C(\rho)$ and $\ell_{C'} (\rho)$ depend only on ${\rm tr}(\rho(e_1\cdot e_2^{-1}))$ and ${\rm tr}(\rho(e_1^2 \cdot e_2))$, respectively, this is sufficient to guarantee that we can find hyperbolic structures $\rho$ on $S$ so that $\ell_C(\rho)\ne\ell_{C'} (\rho)$.  This completes the proof of the Lemma.
\end{proof}

We make the following observation about the proof of Lemma \ref{three holed sphere}.  The important first step is to show that $C$ being a figure-eight curve then implies that $C'$ is also a figure-eight curve, which we did by considering the hyperbolic structure which gave $C$ its shortest length.  However, it is not necessary to consider the hyperbolic structure on $S$ in which all of the ends are cusps.  

By Basmajian \cite{basmajian universal}, we know that among all hyperbolic structures on all surfaces, there is a realised minimum for the lengths of curves with each self-intersection number greater than one (and that these minima increase as the self-intersection number increases). Hence, it suffices in the proof of Lemma \ref{three holed sphere} to consider only those hyperbolic structures on the interior $S$ of the pair of pants for which the lengths of the figure-eight curves are strictly less than the minima for higher self-intersection numbers.  This observation plays an important role in the proofs of future Lemmas.

We next consider two exceptional cases, which arise from similar considerations that underlie the result of Ginzburg and Rudnick \cite{ginzburg rudnick} noted above.   (Looking ahead, the connection is that the isomorphsim on the free group of rank two defined by sending the two generators to their inverses is essentially the hyperelliptic involution on the torus with a single hole.)

Let $\Sigma$ be a compact orientable surface (possibly with empty boundary) and let $S$ be the interior of $\Sigma$.   We say that a hyperbolic structure $\rho$ on $S$ is {\em hyperelliptic} if there exists an orientation-preserving involution $\tau$ of $S$ which induces an isometric involution $\tau_\rho$ of $S_\rho$.   Note that if $\rho$ is a hyperelliptic hyperbolic structure on $S$, then for every curve $C$ on $S$, the lengths of $C$ and $\tau_\rho(C)$ necessarily satisfy $\ell_C (\rho) = \ell_{\tau_\rho(C)} (\rho)$.  

We extend this definition to say that the surface $\Sigma$ itself is {\em hyperelliptic} if the discussion in the preceeding paragraph holds for every hyperbolic structure on $S = {\rm int}(\Sigma)$, that is, so that there exists an orientation-preserving (topological) involution $\tau$ of $S$ which induces an isometric involution $\tau_\rho$ of $S_\rho$ for every hyperbolic structure $\rho$ on $S$.   Where necessary, we refer to $\tau$ as the {\em hyperelliptic involution} on $\Sigma$.

With this definition, there are two hyperelliptic surfaces, the closed orientable surface $\Sigma_2$ of genus two and the torus with a single hole $\Sigma_{1,1}$.  (For more information on hyperelliptic  surfaces, we refer the reader to any introductory text on Riemann surfaces, such as Farkas and Kra \cite{farkas kra}.)   On $\Sigma_2$, we have a complete understanding of how the hyperelliptic involution behaves with respect to simple curves.

\begin{theorem} [Haas and Susskind \cite{haas susskind}] Let $\tau$ be the hyperelliptic involution on a closed Riemann surface $S$ of genus two.  We then have that $\tau(C) = C$ for every simple curve on $S$.  Moreover, let $\alpha$ be a simple closed geodesic on $S$.  If $\alpha$ is a separating geodesic, then $\tau$ preserves the orientation of $\alpha$, and if $\alpha$ is non-separating, then $\tau$ reverses the orientation of $\alpha$. 
\label{haas susskind}
\end{theorem}

There are two possible types of pants decompositions of the closed, orientable surface $\Sigma_2$ of genus two.  One consists of two pairs of pants, where each boundary component of each pair of pants is a non-separating curve on $\Sigma_2$.  The other consists of two tori each with a single hole, where the boundary component of each torus is a separating curve on $\Sigma_2$ and where the curves of the pants decomposition are the common boundary of the two tori, along with a simple non-separating curve on each torus.  One particular consequence of Theorem \ref{haas susskind} is that in the former case, the hyperelliptic involution $\tau$ on $\Sigma_2$ interchanges the interiors of the two pairs of pants.   In the latter case, the hyperelliptic involution takes each of the tori to itself. 

Combining this observation with Lemma \ref{three holed sphere} yields the following partial result.

\begin{lemma} Let $\Sigma_2$ be the closed, orientable surface of genus two, and let $\tau$ be the hyperelliptic involution on $\Sigma_2$.  Let $C$ be a curve on $\Sigma_2$ with self-intersection number one, and assume that $C$ is contained in the interior of a pair of pants $P\subset \Sigma_2$ for which every boundary component of $P$ is a non-separating curve on $\Sigma_2$.  If $C'$ is any curve on $\Sigma_2$ with $\ell_C(\rho) = \ell_{C'} (\rho)$ for all hyperbolic structures $\rho$ on $\Sigma_2$, then either $C' = C$ or $C' = \tau(C)$. 
\label{genus two}
\end{lemma}

\begin{proof}  Since for this pants decomposition the hyperelliptic involution interchanges the interiors of the two pairs of pants, we have that $C$ and $\tau(C)$ are disjoint.  

We first observe that if $C'$ is any curve on $\Sigma_2$ with $\ell_C(\rho) = \ell_{C'} (\rho)$ for all hyperbolic structures $\rho$ on $\Sigma_2$, then $C'$ is disjoint from $\partial P$.  Indeed, if there is a component $\gamma$ of $\partial P$ which intersects $C'$,  then consider any sequence $\{ \rho_n\}$ of hyperbolic structures on $\Sigma_2$ for which $\ell_{\gamma} (\rho_n)\rightarrow 0$ as $n\rightarrow\infty$ and along which the lengths of the other 2 curves in $\partial P$ remain constant as $n\rightarrow\infty$ (constructed as above in the proof of Lemma \ref{mcshane surface}).   By Lemma \ref{collar lemma}, we then have that $\ell_{C'} (\rho_n)\rightarrow \infty$ as $n\rightarrow\infty$, while $\ell_C (\rho_n)$ remains bounded as $C$ is disjoint from $\gamma$, a contradiction.

Therefore, either $C'\subset P$ or $\tau(C')\subset P$.  In either case, we complete the proof using Lemma \ref{three holed sphere} (and the remark immediately following its proof), together with the observation that (as a consequence of hyperellipticity), there is a one-to-one correspondence between hyperbolic structures on $P$ for which all ends are funnels and hyperbolic structures on $\Sigma_2$.  Indeed, any hyperbolic structure $\rho$ on $P$ for which all ends are funnels can be doubled across the convex core of $P_\rho$ using $\tau_\rho$ to obtain a hyperbolic structure on $\Sigma_2$, and by hyperellipticity, every hyperbolic structure on $\Sigma_2$ arises in this way.   

In the case that $C'\subset P$, we have that $C$ and $C'$ are two curves in the pair of pants $P$, where $C$ has self-intersection number one and $\ell_C (\rho) = \ell_{C'} (\rho)$ for every hyperbolic structure $\rho$ on $P$ for which the ends of $P_\rho$ are all funnels.  By Lemma \ref{three holed sphere}, we then have that $C = C'$.   

In the case that $\tau(C')\subset P$, then the argument just given yields that $C = \tau(C')$, and by applying $\tau$ again, we obtain that $C' = \tau(C)$. 
\end{proof}

The remaining preliminary case before we address the general discussion is that $C$ is contained in a torus with a hole.

\begin{lemma} Let $\Sigma_{1,1}$ be the torus with a single hole with interior $S$ and let $\tau$ be the hyperelliptic involution on $\Sigma_{1,1}$.   Let $C$ be a curve on $S = {\rm int}(\Sigma_{1,1})$ with self-intersection number one.    Then, for any curve $C'\ne C$ on $S$ with self-intersection number one and with $\ell_C (\rho) = \ell_{C'}(\rho)$ for all hyperbolic structures $\rho$ on $S$, we have that $C' = \tau(C)$. 
\label{punctured torus}
\end{lemma}

\begin{proof} Since $C$ has self-intersection number one, a representative loop $\gamma$ in $C$ is the concatentation $\gamma = \gamma_0\cdot \gamma_1$ of two homotopically non-trivial and homotopically distinct (oriented) simple loops $\gamma_0$ and $\gamma_1$ on $S$.    Working in a hyperbolic structure $\rho$ on $S$ of infinite volume, so that its end is a funnel, let $C_0$ be the curve determined by $\gamma_0$ and $C_1$ the curve determined by $\gamma_1$.  As before, let $C^*_k$ be the closed geodesic representative of $C_k$ and hence the simple closed geodesic freely homotopic to $\gamma_k$.  

Note that since $\gamma_0$ and $\gamma_1$ are freely homotopic to disjoint loops on $S$, which is easily seen by performing the required cut and paste operation in a small neighborhood of their point of intersection, we have that their corresponding curves $C_0$ and $C_1$ are either disjoint or equal, as are the simple closed geodesics $C^*_0$ and $C^*_1$.  Up to relabelling, there are three possibilities, which we take in turn.

All three possibilities use the same basic argument we've used before, with minor variations.   Given the curve $C$, we find a curve $E$ so that either $E$ is disjoint from $C$ and intersects $C'$, which allows us to construct a contradiction to the assumption that the lengths of $C$ and $C'$ are equal over all hyperbolic structures on $S$ by letting the length of $E$ go to $0$, or both $C$ and $C'$ are disjoint from $E$, in which case cutting along $E$ reduces us to the case of $S$ being a three-holed sphere and we can then use the argument from Lemma \ref{three holed sphere} and the paragraphs following its proof.

The first possibility is that  $C^*_0$ and $C^*_1$ are both separating simple closed geodesics.  On $\Sigma_{1,1}$, there cannot exist two disjoint separating curves, by a straightforward Euler characteristic argument, and so we must have that $C^*_0 = C^*_1$ and  $C_0 = C_1$.   Moreover, we have that $\tau(C) = C$ and that $\tau$ preserves the orientation of (the geodesic representative of) $C$.

As $C_0$ and $C_1$ are both separating, and hence peripheral, there exists a non-separating simple curve $E$ which is disjoint from both $\gamma_0$ and $\gamma_1$, and hence disjoint from $C$.   If $E$ intersects $C'$, then we construct a sequence of hyperbolic structures on $S$ in which the length of $E$ goes to $0$ while the length of $C$ remains bounded.   However,  the length of $C'$ goes to $\infty$ as the length of the intersecting curve $E$ goes to $0$, which yields a contradiction. 

Hence in the case that both $C_0$ and $C_1$ are separating, we see that every non-separating simple curve $E$ disjoint from $C$ is also disjoint from $C'$.  Cutting $S$ along such an $E$ yields a three-holed sphere $Z$ for which the two peripheral curves that arise from $E^*$ have equal positive length.   On $Z$, we have that $C$ is a figure-eight curve, as it has a single point of self-intersection, and for all hyperbolic structures on $Z$ for which the two peripheral curves that arise from $E^*$ have equal positive length, we have that $C$ and $C'$ have equal length.  

We now follow the argument from the proof of Lemma \ref{three holed sphere}, together with the observation immediately following that proof, to conclude that  $C'$ also has self-intersection number one.  Moreover, by varying the two available lengths, we have that both $C$ and $C'$ go around the end of $Z$ determined by $C_0$ (or equivalently $C_1$) and one of the other two ends.  Since the other two ends of $Z$ have the same length over all of the relevant hyperbolic structures, we have that $\ell_C (\rho) = \ell_{C'}(\rho)$ for all hyperbolic structures $\rho$ on $S$.  

It remains only to show that $C' = \tau(C) =C$.  This however follows from the observation that on $Z$, we have that $C'$ and $C$ are both curves on $Z$ with self-intersection number one that pass around the same two ends of $Z$, and hence are the same figure-eight curve.

The second possibility is that both of the geodesics $C^*_0$ and $C^*_1$ are non-separating simple closed geodesics.  On $\Sigma_{1,1}$, there cannot exist two disjoint non-separating curves, since the complement in $S$ of one non-separating simple curve is a three-holed sphere which itself contains no non-separating simple closed curves, and so we must have that $C_0 = C_1$.   Moreover, we have that $\tau(C) = C$ and that $\tau$ preserves the orientation of (the geodesic representative of) $C$.

We note that by construction $C$ and $C_1$ are disjoint.   Moreover, given any hyperbolic structure on $S$, cutting $S$ along the simple closed geodesic representative $C_1^\ast$ of $C_1$ yields a hyperbolic structure on the three-holed sphere $Z$ for which two of the peripheral curves have equal lengths.  The hyperbolic structures on $S$ are in one-to-one correspondence with the hyperbolic structures on the three-holed sphere $Z$ for which the two  peripheral curves that arise from $C_1^\ast$ have equal positive length.  Moreover, $C$ is a figure-eight curve on $Z$.

We now note that $C'$ must also be disjoint from $C_1$, as otherwise, we can find (as we have done several times up to this point in the argument) a sequence of hyperbolic structures on $S$ in which the length of $C_1$ goes to zero, thereby forcing the length of $C'$ to go to infinity, while the length of $C$ remains bounded, which is a contradiction to our assumption that $\ell_C (\rho) = \ell_{C'} (\rho)$ for all hyperbolic structures $\rho$ on $S$.  In particular, $C'$ is a curve on $Z$.  We now follow the same argument as given in the previous case.

The third and final possibility is that $C^*_0$ is separating and $C^*_1$ is non-separating.   By Theorem \ref{haas susskind} (after doubling $\Sigma_{1,1}$ across its boundary and extending the hyperelliptic involution on $\Sigma_{1,1}$ to the hyperbolic involution on $\Sigma_2$), we have that $\tau(C_k) = C_k$ for $k=0$ and $k=1$.  However, we also have that $\tau$ preserves the orientation on $\gamma_0$ and that $\tau$ reverses the orientation on $\gamma_1$.  In particular, we must have that $\tau(C) \ne C$, because we cannot have that $\tau$ preserves the orientation on one part of $C$ and reverses the orientation on the other part of $C$.  

Since $C_0$ is peripheral, we have that $C_0$ and $C_1$ are disjoint, and hence that $C$ and $C_1$ are disjoint.   Moreover, given any hyperbolic structure on $S$, cutting $S$ along the simple closed geodesic representative $C_1^\ast$ of $C_1$ yields a hyperbolic structure on the three-holed sphere $Z$ for which two of the peripheral curves have equal lengths.  The hyperbolic structures on $S$ are in one-to-one correspondence with the hyperbolic structures on the three-holed sphere $Z$ for which the two  peripheral curves that arise from $C_1^\ast$ have equal positive length.  Moreover, $C$ is a figure-eight curve on $Z$.

We now note that $C'$ must also be disjoint from $C_1$, as otherwise, we can find (as we have done several times up to this point in the argument) a sequence of hyperbolic structures on $S$ in which the length of $C_1$ goes to zero, thereby forcing the length of $C'$ to go to infinity, while the length of $C$ remains bounded, which is a contradiction to our assumption that $\ell_C (\rho) = \ell_{C'} (\rho)$ for all hyperbolic structures $\rho$ on $S$.  In particular, $C'$ is a curve on $Z$. 

We now follow the argument from the proof of Lemma \ref{three holed sphere}, together with the observation immediately following that proof, to conclude that  $C'$ also has self-intersection number one.  Moreover, by varying the two available lengths, we have that both $C$ and $C'$ go around the end of $Z$ determined by $C_0$ and one of the other two ends.  Since the other two ends of $Z$ have the same length over all of the relevant hyperbolic structures, we have that $\ell_C (\rho) = \ell_{C'}(\rho)$ for all hyperbolic structures $\rho$ on $S$.  

It remains only to show that $C' = \tau(C)$.  This however follows from the observation that on $Z$, we have that $C'$ and $\tau(C)$ are both curves on $Z$ with self-intersection number one that pass around the same two ends of $Z$, and hence are the same figure-eight curve.
\end{proof}

Lemma \ref{punctured torus} has the following consequence.  Suppose now that $\Sigma$ is any compact, orientable surface of negative Euler characteristic which is not itself a torus with one hole, and let $T\subset \Sigma$ be an embedded torus with one hole in $\Sigma$.  By this, we mean that there is a separating curve $C$ on $\Sigma$ so that one of the components of the complement of (a simple representative from) $C$ in $\Sigma$ is a torus with one hole.   

In particular, as no boundary components of $\Sigma$ can lie in $T$, we can make the same observation as made above for hyperbolic structures on pairs of pants.  That is, every hyperbolic structure on the interior $S$ of $\Sigma$ restricts to a hyperbolic structure on $T$ for which the one end is a funnel, and conversely, every hyperbolic structure on $T$ for which the one end is a funnel extends (in a highly non-unique way) to a hyperbolic structure on $S$.  

Hence, for every hyperbolic structure $\rho$ on $S ={\rm int}(\Sigma)$, every embedded torus with one hole  $T\subset \Sigma$, and every curve $C$ in $T$ with self-intersection number one, we are in the situation that if we let $\tau_\rho$ be the hyperelliptic involution for the restriction of $\rho$ to the interior of $T$, then the curve $C' =\tau_\rho(C)$ will necessarily satisfy $\ell_C (\rho) = \ell_{C'} (\rho)$.  

Lemma \ref{genus two} and Lemma \ref{punctured torus} (and these remarks following the proof of Lemma \ref{punctured torus}) together highlight several situations in which the length function of a curve with self-intersection number one on an orientable, hyperbolic surface does not completely characterise the curve.   We group these exceptional situations together as the {\em hyperelliptic exception}, as all of them arise in one way or another from hyperellipticity.  The main result of this Section is that these are the only exceptional situations. 

{\bf Theorem \ref{one-self-intersection}.} {\em Let $S$ be the interior of a compact, orientable surface $\Sigma$ of negative Euler characteristic.  Let $C$ be a curve on $S$ with self-intersection number one.  If $C'$ is a curve on $S$ that satisfies $\ell_{C'} (\rho) = \ell_C (\rho)$ for all $\rho\in {\cal D}(\pi_1(S))$, then either $C' = C$ or we are in the case of the hyperelliptic exception, so that one of the following holds:
\begin{enumerate}
\item We have that $S = \Sigma_2$ is the closed, orientable surface of genus two with hyperelliptic involution $\tau$, the curve $C$ is contained in the interior of a pair of pants $P\subset \Sigma_2$ for which every boundary component of $P$ is a non-separating curve on $\Sigma_2$, and $C' = \tau(C)$. 
\item There exists an embedded torus with one hole $T\subset \Sigma$ containing $C$, that $\tau$ is the hyperelliptic involution restricted to $T$, and that $C' = \tau(C)$.  
\end{enumerate}}

\begin{proof} The proof of Theorem \ref{one-self-intersection} uses many of the same basic facts as does the proof of Theorem \ref{mcshane surface}, though the details are significantly different, together with the Lemmas above.  Again by using Fenchel-Nielsen coordinates and Lemma \ref{hempel} as in the proof of Theorem \ref{mcshane surface}, we see immediately that $C'$ cannot be simple.  

For the sake of concreteness, fix a hyperbolic structure $\rho_0$ on $S$.  (The proof is independent of which hyperbolic structure is chosen.)  Let $X$ be the smallest subsurface of $S_{\rho_0}$ with totally geodesic boundary containing the geodesic representative $C^\ast$ of $C$, and let $Y$ be the closure of $S_{\rho_0} \setminus X$, so that $Y$ is a (possibly disconnected) subsurface of $S_{\rho_0}$ with totally geodesic boundary.  By considering for instance a stable neighborhood of $C^\ast$, we see that there are two possibilities for $X$, namely either $X$ is a pair of pants or $X$ is a torus with one hole.   

In the former case, in which  $X$ is a pair of pants, we have already seen that $X$ has three boundary geodesics.   The curve $C'$ either intersects one of the boundary geodesics of $X$, is contained in $X$, or is contained in (a component of) $Y$.  

If $C'$ intersects a boundary geodesic of $X$, then as we have done several times, we construct a sequence of hyperbolic structures on $S$ for which the length of this boundary geodesic goes to zero, which forces the length of $C'$ to go to infinity, while leaving the length of $C$ bounded.   This contradicts our basic assumption that the lengths of $C$ and $C'$ are equal for all hyperbolic structures on $S$.

If $C'$ is contained in $X$, we have that $C' = C$ by Lemma \ref{three holed sphere}.   

If $C'$ is contained in (a component of) $Y$, then either $Y$ is itself a pair of pants, in which case $S$ is a closed surface of genus two and by Lemma \ref{genus two} we are in exceptional case (1) in the statement of the Theorem, or $Y$ contains a non-peripheral simple curve $\eta$ that intersects $C'$.  In this latter case, we  construct a sequence of hyperbolic structures on $S$, all of which are extensions of a fixed hyperbolic structure on $X$, for which the length of $\eta$ goes to zero, so that in turn the length of $C'$ goes to infinity, while the length of $C$ remains constant.  Again, this yields a contradiction to our basic assumption that the lengths of $C$ and $C'$ are equal for all hyperbolic structures on $S$. 

In the latter case, in which $X$ is a torus with a hole, we have already seen that $X$ has a single boundary geodesic, which is necessarily a separating curve on $S$, and so in this case the complement $Y$ of $X$ is connected and has positive genus.  The curve $C'$ either intersects this boundary geodesic, is contained in $X$, or is contained in $Y$.

If $C'$ intersects the boundary geodesic of $X$, then as we have done several times, we construct a sequence of hyperbolic structures on $S$ for which the length of this boundary geodesic goes to zero, which in turn forces the length of $C'$ to go to infinity, while leaving the length of $C$ bounded.   This contradicts our basic assumption that the lengths of $C$ and $C'$ are equal for all hyperbolic structures on $S$.

If $C'$ is contained in $X$, then by Lemma \ref{punctured torus}, we are in exceptional case (2) of the Theorem. 

If $C'$ is contained in $Y$, there exists a non-peripheral simple curve $\eta$ in $Y$ that intersects $C'$.  As we have done before, we  construct a sequence of hyperbolic structures on $S$, all of which are extensions of a fixed hyperbolic structure on $X$, for which the length of $\eta$ goes to zero, so that in turn the length of $C'$ goes to infinity, while the length of $C$ remains constant.  Again, this yields a contradiction to our basic assumption that the lengths of $C$ and $C'$ are equal for all hyperbolic structures on $S$. 
\end{proof}

It is not possible to generalise Theorem \ref{one-self-intersection} further to higher self-intersection number, as is demonstrated by the following example.  Let $S$ be the torus with a single hole with fundamental group $\pi_1(S) = \langle a, b\rangle$.  Horowitz \cite{horowitz} noted that the two elements $w = aba^2 b^{-1}$ and $w' = a^2 bab^{-1}$ generate non-conjugate maximal infinite cyclic subgroups and hence represent non-equal curves, have equal characters $\chi[w] = \chi[w']$, and both have two self-intersection points.

Beyond self-intersection numbers, there are remarkably few topological characterisations of pairs (or $n$-tuples) of curves which have the same character; that is, there are remarkably few conditions $P$ for which the following statement holds true: 

{\em Let $S$ be the interior of a compact, orientable surface of negative Euler characteristic.  Let $C$ be a curve on $S$ satisfying the condition $P$.  If $C'$ is a curve on $S$ that satisfies $\ell_{C'} (x) = \ell_C (x)$ for every hyperbolic structure $x$ on $S$, then either $C' = C$ or we are in one of a small number of explicitly listed, geometrically natural cases.}

An early conjectural such characterisation was that two curves have the same character if and only if they have the same intersection number with every simple curve on the surface.  The necessity of this characterisation was intuitively straightforward and proved by Leininger \cite{leininger}, who also gave examples to show of curves whose length functions are not equal but which have the same intersection number with every simple curve on the surface.  

\section{$3$-dimensional preliminaries}
\label{preliminaries}

The purpose of this Section is to present the background material on $3$-manifolds and Kleinian groups that we will need in future Sections.  Standard references for this material are Hempel \cite{hempel book} for $3$-manifold topology in general, and Maskit \cite{maskit book}, Kapovich \cite{kapovich}, and Matsuzaki and Taniguchi \cite{mt} for Kleinian groups and hyperbolic $3$-manifolds.   Given these references, we do not always provide references to the original sources.

\subsection{$3$-manifold topology}
\label{topology basics}

A compact, orientable $3$-manifold $M$ is {\em irreducible} if every embedded 2-sphere in $M$ bounds a ball in $M$.   We note that if $M$ is irreducible and has non-empty boundary, then every boundary component has positive genus. 

Let $M$ be a compact, orientable, irreducible $3$-manifold.  An orientable, embedded surface $S\subset M$ is {\em properly embedded} if $S\cap \partial M = \partial S$.  A properly embedded surface $(S, \partial S)\subset (M, \partial M)$ is {\em incompressible} if $\pi_1(S)$ is infinite and the inclusion $S\hookrightarrow M$ induces an injective map on fundamental groups.  A properly embedded surface $S\subset M$ is {\em essential} if $S$ is incompressible and not homotopic into $\partial M$.   

Similarly, a component $S$ of $\partial M$ is {\em incompressible} if the inclusion $S\hookrightarrow M$ induces an injective map on fundamental groups.   A union $S = S_1\cup \cdots \cup S_n$ of incompressible components of $\partial M$ is {\em an-annular} if there does not exist an essential annulus $A$ in $M$ with both components of $\partial A$ contained in $S$, and $M$ is {\em acylindrical} if the whole of $\partial M$ is an-annular.  

A compact, orientable, irreducible $3$-manifold $M$ is {\em atoroidal} if every incompressible torus in $M$ is homotopic into $\partial M$.  A compact, orientable, irreducible $3$-manifold $M$ has {\em atoroidal boundary} if every component of $\partial M$ has genus at least two.

A $3$-submanifold $M$ of an irreducible $3$-manifold $N$ is {\em incompressible} if $M$ is irreducible and the inclusion $M \hookrightarrow N$ induces an injective map on fundamental groups.

\subsection{Kleinian groups}
\label{kleinian basics}

A {\em Kleinian group} $\Gamma$  is a discrete subgroup of the group ${\rm Isom}^+({\mathbb H}^3) \cong {\rm PSL}_2({\mathbb C})$ of orientation-preserving isometries of the real hyperbolic $3$-space ${\mathbb H}^3$.  The action of $\Gamma$ on ${\mathbb H}^3$ extends to an action by conformal homeomorphisms (M\"obius transformations) on the Riemann sphere $\overline{\mathbb C}$, which is the boundary at infinity of ${\mathbb H}^3$.  The {\em domain of discontinuity} $\Omega(\Gamma)$ of $\Gamma$ is the largest open subset of $\overline{\mathbb C}$ on which $\Gamma$ acts properly discontinuously.  The {\em limit set} $\Lambda(\Gamma)$ is the complement of $\Omega(\Gamma)$ in $\overline{\mathbb C}$, or equivalently, the closure of the set of fixed points of infinite order elements of $\Gamma$.   We assume that all Kleinian groups in this paper are torsion-free.  

The {\em convex hull} ${\rm hull}(\Gamma)$ of $\Gamma$ is the smallest non-empty convex subset of ${\mathbb H}^3$ which is invariant under $\Gamma$.  Equivalently, the convex hull is the smallest convex subset of ${\mathbb H}^3$ containing all of the hyperbolic lines in ${\mathbb H}^3$ both of whose endpoints at infinity lie in $\Lambda(\Gamma)$.    The quotient of the convex hull is the {\em convex core} ${\rm core}(\Gamma) = {\rm hull}(\Gamma)/\Gamma$ of the hyperbolic $3$-manifold ${\mathbb H}^3/\Gamma$, which is the smallest convex submanifold of ${\mathbb H}^3/\Gamma$ whose inclusion induces a homotopy equivalence.  

A Kleinian group $\Gamma$ is {\em geometrically finite} if some, and hence every, $\varepsilon$-neighbhorhood of ${\rm core}(\Gamma)$ has finite volume, and is {\em convex co-compact} if its convex core is compact.   Equivalently, $\Gamma$ is convex co-compact if and only if either, and hence both, of its associated  $3$-manifolds $({\mathbb H}^3\cup\Omega(\Gamma))/\Gamma$ and ${\rm core}(\Gamma)$ are compact.  In this case, we can see that ${\rm core}(\Gamma)$ is naturally homeomorphic to $({\mathbb H}^3\cup\Omega(\Gamma))/\Gamma$.

\subsection{Hyperbolic structures on $3$-manifolds and deformation theory of Kleinian groups}
\label{deformation basics}

A compact, orientable $3$-manifold $M$ is {\em hyperbolizable} if there exists a (necessarily finitely generated) Kleinian group $\Gamma$ so that ${\rm int}(M)\cong {\mathbb H}^3/\Gamma$.  We refer to $\Gamma$ as a Kleinian group {\em uniformizing} $M$.   In general, if $\Gamma$ is a geometrically finite Kleinian group uniformizing $M$, then $\Omega(\Gamma)/\Gamma$ is naturally identified with a subset of $\partial M$ which is the complement of a finite collection of annuli in $\partial M$ together with all the torus components of $\partial M$.

A hyperbolizable $3$-manifold is necessarily orientable, irreducible, and atoroidal.    We can say slightly more.  For a hyperbolizable $3$-manifold $M$, every maximal ${\mathbb Z}\oplus {\mathbb Z}$ subgroup of $\pi_1(M)$ corresponds to a torus component of $\partial M$.  By this we mean that if $\Gamma$ is a Kleinian group uniformizing $M$ and if $\Theta$ is a maximal ${\mathbb Z}\oplus {\mathbb Z}$ subgroup of $\Gamma$, then all non-trivial elements of $\Theta$ are parabolic and there exists a (necessarily incompressible) torus component $T$ of $\partial M$ so that $\Theta = \pi_1(T)$ (up to conjugacy).  

A finitely generated (torsion-free) group $G$ has naturally associated to it the (possibly empty) space ${\cal D}(G)$ of all realisations of $G$ as a Kleinian group,  that is
\[ {\cal D}(G) =  \{ \rho: G\rightarrow {\rm PSL}_2({\mathbb C})\: |\: \rho\mbox{ is faithful and }\rho(G)\mbox{ is a Kleinian group} \}. \]
As before, the natural topology on ${\cal D}(G)$ comes from choosing a collection $\{ g_1,\ldots, g_p\}$ of elements of $G$ that generates and realizing ${\cal D}(G)$ as a subset of $({\rm PSL}_2({\mathbb C}))^p$ via the map $\rho\mapsto (\rho(g_1),\ldots, \rho(g_p))$.   Let 
\[ {\cal CC}(G) = \{ \rho: G\rightarrow {\rm PSL}_2({\mathbb C})\: |\: \rho(G) \mbox{ is convex co-compact}. \} \]
It is a fundamental result of J\o rgensen \cite{jorgensen} that when non-empty, ${\cal D}(G)$ is closed in the case that $G$ is the fundamental group of a compact, hyperbolizable $3$-manifold with non-empty, incompressible boundary.   

Note that ${\rm PSL}_2({\mathbb C})$ acts naturally on ${\cal D}(G)$ by conjugation, yielding the quotient ${\rm AH}(G) = {\cal D}(G)/{\rm PSL}_2({\mathbb C})$.  We will abuse notation and, where it is clear in context, blur the distinction between convergence of representations in ${\cal D}(G)$ and classes of representations in ${\rm AH}(G)$.  The connection between convergence of sequences in these two spaces is that a sequence $\{ [\rho_n ]\} \subset {\rm AH}(G)$ converges to $[\rho ] \in {\rm AH}(G)$ if and only if there exists a  sequence $\{ h_n\} \subset {\rm PSL}_2({\mathbb C})$ converging to the identity so that $\{ h_n \rho_n h_n^{-1} \}$ converges to $\rho$ in ${\cal D}(G)$.  

We have the following consequence of Mostow-Prasad rigidity.  Let $M$ be a closed, hyperbolizable $3$-manifold.  The hyperbolic structure on $M$ is unique, so that ${\rm AH}(\pi_1(M))$ consists of a single point.   In the case that $\partial M$ is non-empty, we consider hyperbolic structures on the interior ${\rm int}(M)$ of $M$.  Similar to the case of closed $3$-manifolds, suppose that $M$ is a compact, hyperbolizable $3$-manifold with $\partial M$  the union of tori.  Then we have again that ${\rm AH}(\pi_1(M))$ consists of a single point, so that again the hyperbolic structure on ${\rm int}(M)$ is unique.  

For the remainder of this note, we make the standing assumption that $M$ is a compact, hyperbolizable $3$-manifold with non-empty, incompressible, atoroidal boundary, so that $\partial M$ is non-empty, every component of $\partial M$ has genus at least two, or equivalently, so that $\pi_1(M)$ contains no ${\mathbb Z}\oplus {\mathbb Z}$ subgroup.  It is possible to extend the results of this note to compact, hyperbolizable $3$-manifolds with incompressible boundary, whose boundaries contain tori; however, considering such manifolds introduce resolvable but unpleasant complications, some similar in nature to the complications discussed in the remark following the proof of Theorem \ref{mcshane surface} for surfaces.

For such $M$, we have the following description of the structure of ${\cal CC}(\pi_1(M))$ and ${\cal D}(\pi_1(M))$; see the Introduction of Canary and McCullough \cite{canary mccullough} for a more detailed discussion.  The space ${\cal CC}(\pi_1(M))$ consists of a finite collection of disjoint open subsets of ${\cal D}(\pi_1 (M))$ parametrised by equivalence classes of pairs $(M_0, f_0)$, where $M_0$ is a compact, hyperbolizable $3$-manifold and $f_0 : M\rightarrow M_0$ is a homotopy equivalence, with the relation that $(M_0, f_0) \sim (M_1, f_1)$ if there exists a homeomorphism $g: M_0\rightarrow M_1$ with $f_1\sim g\circ f_0$.   It follows from work of Ahlfors, Bers, Kra, Maskit, Sullivan and Thurston that ${\cal CC}(\pi_1(M))$ is the interior of ${\cal D}(\pi_1(M))$.  It follows from the resolution of the Bers--Thurston Density Conjecture by Brock, Canary and Minsky \cite{brock canary minsky}, which in turn follows from their resolution of Thurston's Ending Lamination Conjecture, that $\overline{{\cal CC}(\pi_1(M))} = {\cal D}(\pi_1(M))$.

In particular, given such an $M$, there is a unique distinguished component ${\cal CC}_0 (\pi_1(M))$ of ${\cal CC}(\pi_1(M))$ so that for each $\rho\in {\cal CC}_0 (\pi_1(M))$, there is a homeomorphism $f: M\rightarrow {\rm core} (\rho(\pi_1(M)))$ satisfying $\rho = f_\ast$.  That is, the representations in ${\cal CC}_0(\pi_1(M))$ are exactly those that give rise to quotient hyperbolic $3$-manifolds naturally homeomorphic to ${\rm int}(M)$.  

In the case that $M$ is acylindrical, we have much more.  First, we have that the space of convex co-compact representations is connected, so that ${\cal CC}(\pi_1(M)) = {\cal CC}_0 (\pi_1(M))$; this follows immediately from the result of Johannson \cite{johannson} that a homotopy equivalence between acylindrical $3$-manifolds is homotopic to a homeomorphism.   Second, we have the following case of a fundamental theorem of Thurston.

\begin{theorem} [Thurston \cite{thurston acylindrical}] Let $M$ be a compact, hyperbolizable, acylindrical $3$-manifold with non-empty, incompressible, atoroidal boundary.  Then ${\rm AH}(\pi_1(M))$ is compact.
\label{thurston compactness}
\end{theorem}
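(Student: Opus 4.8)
We outline the standard argument, due to Thurston, via the theory of isometric actions on ${\bf R}$-trees developed by Morgan and Shalen; beyond this it uses only J\o rgensen's theorem (cited above) and basic Haken $3$-manifold topology. Suppose for contradiction that ${\rm AH}(\pi_1(M))$ is not compact, and fix a sequence $[\rho_n]\in{\rm AH}(\pi_1(M))$ with no convergent subsequence. Since ${\cal D}(\pi_1(M))$ is closed by J\o rgensen \cite{jorgensen}, after choosing representatives $\rho_n$ normalised so that the generators move a fixed reference ball in ${\bf H}^3$ of definite size by a definite amount, the absence of a convergent subsequence forces the translation lengths $\ell_{\rho_n}(g)$ of the elements $g$ of a fixed finite generating set to be unbounded. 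Rescaling distances in ${\bf H}^3$ by a sequence $\varepsilon_n\to 0$ chosen so that the largest translation length of a generator is normalised to $1$, and passing to a subsequence, the Bestvina--Paulin / Morgan--Shalen construction produces a minimal isometric action of $\pi_1(M)$ on an ${\bf R}$-tree $T$ with no global fixed point, as an equivariant Gromov--Hausdorff limit of the rescaled actions on ${\bf H}^3$. Because the $\rho_n$ are discrete and faithful, the Morgan--Shalen estimates show this action is \emph{small}: the stabiliser of any non-degenerate segment of $T$ is virtually abelian, and hence --- since $\pi_1(M)$ is torsion-free and, by our standing hypotheses, contains no ${\bf Z}\oplus{\bf Z}$ --- is either trivial or infinite cyclic.

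Next I would extract topology from this action. Since $M$ is orientable, irreducible, has incompressible boundary, and is neither a ball nor a solid torus (its boundary has genus at least two), the Kneser conjecture, established by Stallings, shows $\pi_1(M)$ is freely indecomposable, so no non-trivial free splitting occurs. By the structure theory for small actions of finitely presented groups on ${\bf R}$-trees --- the Rips machine in the form of Bestvina--Feighn, together with Morgan and Shalen's analysis of ${\bf R}$-tree actions of $3$-manifold groups --- the action of $\pi_1(M)$ on $T$ refines to a graph of actions with non-degenerate simplicial edges, in which the vertex actions are simplicial, of surface (measured-lamination) type, or axial. Axial vertices are excluded because $\pi_1(M)$ has no ${\bf Z}\oplus{\bf Z}$, and there is no global surface-type vertex, since that would force $M$ to be an $I$-bundle over a surface, and such manifolds are never acylindrical. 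Hence the decomposition contains at least one non-degenerate simplicial edge, so $\pi_1(M)$ splits non-trivially over a segment stabiliser; free indecomposability rules out a trivial edge group, so $\pi_1(M)$ splits non-trivially over an infinite cyclic subgroup.

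Finally, I would invoke the Annulus Theorem of Jaco--Shalen and Scott: a non-trivial splitting of the fundamental group of a Haken $3$-manifold over an infinite cyclic subgroup is realised by a two-sided, embedded, essential surface which is either an annulus or a torus. Since $M$ is atoroidal, this surface is an essential annulus, and its two boundary curves lie in $\partial M$. This contradicts the acylindricity of $M$. Therefore no escaping sequence exists, and ${\rm AH}(\pi_1(M))$ is compact.

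The crux of the argument is the ${\bf R}$-tree step: constructing the limiting action from the divergent sequence and, above all, verifying (via the Morgan--Shalen estimates for limits of discrete faithful representations) that segment stabilisers are small, and then faithfully converting the small action into an embedded essential annulus through the Rips machine and Morgan--Shalen's $3$-manifold analysis; everything after that is routine $3$-manifold topology. One should also be careful with the normalisation in the first paragraph, so that non-compactness genuinely produces divergence $\varepsilon_n\to 0$ with a non-degenerate tree in the limit, rather than a rescaling that collapses to a point.
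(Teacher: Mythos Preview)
The paper does not prove this theorem; it is quoted from Thurston \cite{thurston acylindrical} and used as a black box (in the proof of Lemma~\ref{acylindrical-bound-element}). So there is no proof in the paper to compare your attempt against.

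That said, your sketch is the standard modern route to Thurston's result, and it is essentially correct as an outline. A couple of remarks on precision. First, the smallness of arc stabilisers in the limiting tree is the delicate point: for arbitrary divergent sequences in the character variety one does not get small stabilisers, and one genuinely needs the discreteness and faithfulness of the $\rho_n$ together with the Margulis lemma (this is the content of the Morgan--Shalen estimates you cite). Second, in ruling out a single surface-type vertex you should be slightly more careful: what one actually uses is Skora's theorem (building on Morgan--Shalen) that a small action of a closed surface group on an ${\bf R}$-tree is dual to a measured lamination, together with the fact that if $\pi_1(M)$ itself is a surface group then $M$ is an $I$-bundle; the acylindricity then gives the contradiction, as you say. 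Finally, it is worth noting that Thurston's original argument in \cite{thurston acylindrical} predates the full Rips machine and proceeds by a more hands-on geometric-limit and pleated-surface analysis; the ${\bf R}$-tree formulation you give is due to Morgan--Shalen and is now the standard way to package the proof.
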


We have an alternate description of the representations in the distinguished component ${\cal CC}_0 (\pi_1(M))$ of ${\cal CC}(\pi_1(M))$.  Given $M$, let $\Gamma$ be a convex co-compact Kleinian group so that ${\mathbb H}^3/\Gamma$ is homeomorphic to ${\rm int}(M)$.   Without loss of generality, assume that $\Gamma = \rho_0 (\pi_1(M))$ for some $\rho_0 \in {\cal CC}_0(\pi_1(M))$.  We can find all other representations in ${\cal CC}_0(\pi_1(M))$ by conjugating $\Gamma$ by quasiconformal homeomorphisms of $\overline{\mathbb C}$ equivariant with respect to the action of $\Gamma$; see for instance Section 3.3 of Matsuzaki and Taniguchi \cite{mt}.  Rephrased, for any $\rho\in {\cal CC}_0(\pi_1(M))$, there exists a quasiconformal homeomorphism $\omega: \overline{\mathbb C}\rightarrow \overline{\mathbb C}$ so that $\rho(\gamma) = \omega \rho_0(\gamma) \omega^{-1}$ for all $\gamma\in \Gamma$.

We will have occasion to make use of the restriction of the realisation of a finitely generated group $G$ as a Kleinian group to a finitely generated subgroup $H$ of $G$.  Specifically, we need the following Lemma, which follows immediately from the description of the representations in ${\cal CC}_0 (G)$ given above in terms of quasiconformal deformations, together with Thurston's theorem (see for instance Morgan \cite{morgan}, Proposition 7.1) that finitely generated subgroups of geometrically finite Kleinian groups with non-empty domain of discontinuity are themselves geometrically finite.

\begin{lemma} Let $N$ be a compact, hyperbolizable $3$-manifold with non-empty, incompressible, atoroidal boundary and let $M\subset N$ be an incompressible $3$-submanifold with incompressible boundary.  If $\rho\in {\cal CC}_0 (\pi_1(N))$, then the restriction of $\rho$ to $\pi_1(M)\subset \pi_1(N)$ yields an element $\rho\in {\cal CC}_0 (\pi_1(M))$.
\label{restricting submanifold}
\end{lemma}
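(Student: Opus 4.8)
The plan is to reduce the assertion to a single basepoint of ${\cal CC}_0(\pi_1(N))$ and then to verify that case by a covering-space argument. There are three things to check: that the restriction map $r\colon {\cal CC}_0(\pi_1(N))\to {\cal D}(\pi_1(M))$, $\rho\mapsto \rho|_{\pi_1(M)}$, takes values in ${\cal CC}(\pi_1(M))$; that its image is connected; and that it hits the distinguished component ${\cal CC}_0(\pi_1(M))$ at one point. Since ${\cal CC}_0(\pi_1(M))$ is one of the finitely many disjoint open, hence clopen, pieces of ${\cal CC}(\pi_1(M))$, a connected subset of ${\cal CC}(\pi_1(M))$ meeting it is contained in it, so these three checks suffice.

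First I would show that $r$ lands in ${\cal CC}(\pi_1(M))$. As $M$ is compact, $\pi_1(M)$ is finitely generated, so for $\rho\in {\cal CC}_0(\pi_1(N))$ the group $\rho(\pi_1(M))$ is a finitely generated subgroup of the convex co-compact --- in particular geometrically finite, with non-empty domain of discontinuity --- Kleinian group $\rho(\pi_1(N))$. By the theorem of Thurston quoted just above the statement, $\rho(\pi_1(M))$ is geometrically finite; it has no parabolics because $\rho(\pi_1(N))$, being convex co-compact, has none; hence $\rho(\pi_1(M))$ is convex co-compact and $r(\rho)\in {\cal CC}(\pi_1(M))$. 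The map $r$ is continuous, being a coordinate projection after choosing generators of $\pi_1(N)$ extending a generating set of $\pi_1(M)$, and ${\cal CC}_0(\pi_1(N))$ is connected (it is the quasiconformal deformation space of a single convex co-compact group), so $r({\cal CC}_0(\pi_1(N)))$ is connected. In fact the quasiconformal description gives the cleaner statement pointed to in the paper: if $\rho=\omega\,\rho_0\,\omega^{-1}$ for a $\rho_0(\pi_1(N))$-equivariant quasiconformal $\omega$, then $\omega$ is also $\rho_0(\pi_1(M))$-equivariant, so $r(\rho)$ is an equivariant quasiconformal deformation of $r(\rho_0)$ and lies in the same component of ${\cal CC}(\pi_1(M))$ as $r(\rho_0)$.

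It then remains to treat a basepoint $\rho_0\in {\cal CC}_0(\pi_1(N))$. Write $\Gamma_0=\rho_0(\pi_1(N))$ and $H=\rho_0(\pi_1(M))$. Because $\rho_0$ lies in the distinguished component there is a homeomorphism $f_0\colon N\to {\rm core}(\Gamma_0)$ with $(f_0)_\ast=\rho_0$, where ${\rm core}(\Gamma_0)$ is a compact submanifold of ${\bf H}^3/\Gamma_0\cong {\rm int}(N)$; set $M'=f_0(M)$, a compact incompressible submanifold of ${\bf H}^3/\Gamma_0$ with $\pi_1(M')=H$. Let $q\colon W\to {\bf H}^3/\Gamma_0$ be the cover corresponding to $H$, so $W={\bf H}^3/H$. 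The inclusion $M'\hookrightarrow {\bf H}^3/\Gamma_0$ is injective on $\pi_1$ with image $H$, so it lifts to $\widetilde\iota\colon M'\to W$; since $q\circ\widetilde\iota$ is the (injective) inclusion, $\widetilde\iota$ is an embedding, and it induces an isomorphism of $\pi_1(M')$ onto $\pi_1(W)=H$. Thus $\widetilde\iota(M')$ is a compact core of $W$ homeomorphic to $M$. On the other hand $H$ is convex co-compact, so $W$ is the interior of the compact manifold $({\bf H}^3\cup\Omega(H))/H$, and ${\rm core}(H)$ is a second compact core of $W$, homeomorphic to $({\bf H}^3\cup\Omega(H))/H$. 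By uniqueness of compact cores in an orientable, irreducible $3$-manifold with finitely generated fundamental group, $\widetilde\iota(M')$ and ${\rm core}(H)$ are homeomorphic; since both carry all of $\pi_1(W)$, the homeomorphism may be taken to commute up to homotopy with the inclusions into $W$. Hence $M$ is homeomorphic to ${\rm core}(H)$ by a homeomorphism inducing $\rho_0|_{\pi_1(M)}$ on fundamental groups, which is exactly the defining property of ${\cal CC}_0(\pi_1(M))$; together with the previous paragraph this would complete the proof.

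The only step that is not formal is this last one, and within it the passage from a homotopy equivalence to a homeomorphism: it is immediate that the cover $W$ is homotopy equivalent to $M$, but concluding that $W\cong {\rm int}(M)$ --- equivalently, that $r(\rho_0)$ lies in the distinguished component and not merely in some component of ${\cal CC}(\pi_1(M))$ --- is where one must use tameness of convex co-compact quotients together with uniqueness of compact cores. Everything else (geometric finiteness of the restriction via Thurston's theorem, continuity of $r$, and the reduction via connectedness or via equivariant quasiconformal maps) is routine.
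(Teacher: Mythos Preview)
Your proof is correct and follows the same two ingredients the paper invokes---Thurston's theorem that finitely generated subgroups of a geometrically finite Kleinian group with non-empty domain of discontinuity are geometrically finite, together with the description of ${\cal CC}_0$ in terms of equivariant quasiconformal deformations---so the overall approach matches. The paper's argument is a single sentence and leaves implicit the verification that the restriction of a basepoint $\rho_0\in{\cal CC}_0(\pi_1(N))$ actually lands in the \emph{distinguished} component ${\cal CC}_0(\pi_1(M))$ rather than some other component; you supply this step via the covering $W={\bf H}^3/H$, the lift of $M'$ to a compact core of $W$, and uniqueness of compact cores, which is a genuine addition and the right way to fill the gap. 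One small remark: the clause ``since both carry all of $\pi_1(W)$, the homeomorphism may be taken to commute up to homotopy with the inclusions'' is not quite self-justifying---abstract homeomorphism of cores does not by itself pin down the homotopy class---but the hypothesis that $\partial M$ is incompressible together with tameness of $W$ (so the complement of $\widetilde\iota(M')$ in the compactification $\overline W$ is a product) does give the isotopy you need, and you correctly flag this as the one non-formal step.
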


\subsection{Curves in hyperbolizable $3$-manifolds}
\label{curves basics}

As per our standing assumption, let $M$ be a compact, hyperbolizable $3$-manifold with non-empty, incompressible, atoroidal boundary.  We mimic the definitions relating to and the basic properties of curves as given for surfaces in Section \ref{curves on surfaces}. 

The non-trivial element $c\in \pi_1(M)$ is {\em maximal} if it is not the proper power of another element of $\pi_1(M)$, or equivalently if $\langle c\rangle$ is a maximal cyclic subgroup of $\pi_1(M)$.   As with surfaces, every non-trivial element of $\pi_1(M)$ is either maximal or a proper power of a maximal element, as elements of the fundamental group of a hyperbolizable $3$-manifold are not infinitely divisible. 

A {\em curve} $C$ in $M$ is the free homotopy class corresponding to a maximal element $c\in \pi_1 (M) $.    We work throughout with {\em unoriented curves}, so that for an element $c\in \pi_1(M)$ with its corresponding free homotopy class $C$, we have that $C$ is also the curve corresponding to $c^{-1}$.  Since curves are free homotopy classes of elements of $\pi_1(M)$, each curve naturally corresponds to a conjugacy class of maximal cyclic subgroups of $\pi_1(M)$. 

The relationship of curves in a compact, hyperbolizable $3$-manifold $M$ to the boundary $\partial M$ of $M$ is more complicated than the corresponding relationship for surfaces.  Let $C$ be a curve in $M$.  The basic distinction is whether a curve $C$ is or is not freely homotopic to a curve on $\partial M$; even here, complications arise, because even in the case that $C$ is freely homotopic to a curve on $\partial M$, it may be that $C$ is freely homotopic into more than one component of $\partial M$ or that $C$ is freely homotopic to distinct curves in the same component of $\partial M$.   Moreover, we have the distinction of whether $C$ is freely homotopic to a simple or a non-simple curve on $\partial M$. 

Note that for a curve $C$ in $M$, if $C$ is homotopic to distinct simple curves in the same component of $\partial M$, the curves in $\partial M$ must be disjoint.  Moreover, if $C$ is homotopic to curves in distinct components of $\partial M$, then $C$ is homotopic to a simple curve on one boundary component if and only if $C$ is homotopic to a simple curve in every  component of $\partial M$ into which it is homotopic.  

One additional subtlety is that there exists a compact, hyperbolizable $3$-manifold $M$ and a curve $C$ in $M$ so that $C$ is not itself freely homotopic onto $\partial M$, but some proper power of $C$ is freely homotopic to a simple curve on $\partial M$.  However, we will not consider such curves in this note.

Let $\rho\in {\cal CC}_0 (\pi_1(M))$, so that ${\rm int}(M)$ is homeomorphic to ${\mathbb H}^3/\rho(\pi_1(M))$.  Because we have assumed that $M$ has atoroidal boundary,  $\pi_1(M)$ has no ${\mathbb Z}\oplus {\mathbb Z}$ subgroups, and so $\rho(\pi_1(M))$ has no parabolic elements.   In particular, each curve $C$ in $M$ is freely homotopic to a (unique) closed geodesic $C^\ast$ in ${\mathbb H}^3/\rho(\pi_1(M))$.  There is a natural correspondence between maximal cyclic subgroups of $\pi_1(M)$, or equivalently curves in $M$, on the one hand and closed geodesics in ${\mathbb H}^3/\rho(\pi_1(M))$ for some, and hence for every, hyperbolic structure $\rho\in {\cal CC}_0(\pi_1(M))$ on the other hand.   

Unlike the case of hyperbolic surfaces, there is no way to associate adjectives such as simple to a curve $C$ in $M$, as the simplicity in ${\mathbb H}^3/\rho(\pi_1(M))$ of $C^\ast$ depends sensitively on the hyperbolic structure induced by $\rho$.    In fact, by the discussion above of the structure of ${\cal CC}(\pi_1(M))$, the relationship of a curve $C$ in $M$ to $\partial M$ is also problematic, as the topological type of the hyperbolic $3$-manifold varies over the components of ${\cal CC}(\pi_1(M))$.  We resolve these issues by starting with the  $3$-manifold $M$ and restricting our attention to the (convex co-compact) hyperbolic structures on ${\rm int}(M)$, which are precisely the elements of ${\cal CC}_0 (\pi_1(M))$.  

Each curve $C$ in $M$ has associated to it a map $\ell_C: {\cal CC}_0(\pi_1(M))\rightarrow {\mathbb R}$ given by setting $\ell_C (\rho)$ to be the (real) length of the associated closed geodesic $C^\ast$ in the hyperbolic $3$-manifold ${\mathbb H}^3/\rho(\pi_1(M))$.   (We note that it is possible to consider these arguments using the complex length associated to loxodromic elements and their corresponding closed geodesics, but for the purposes of this note, considering real length is sufficient.)

We make use of the following result, which is an immediate consequence of results from Sections 2 and 3 of Maskit \cite{maskit-parabolics}, expressed in the language above.

\begin{proposition} Let $M$ be a compact, hyperbolizable $3$-manifold with non-empty, incompressible, atoroidal boundary and let $C$ be a curve in $M$ freely homotopic to a simple curve on $\partial M$.  There exists a sequence $\{ \rho_n\}\subset {\cal CC}_0 (\pi_1(M))$ so that $\ell_C (\rho_n)\rightarrow 0$ as $n\rightarrow\infty$.
\label{maskit-squeezing}
\end{proposition}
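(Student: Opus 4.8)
The plan is to realize the curve $C$ as an element that can be "pinched" in the conformal boundary $\Omega(\Gamma)/\Gamma$, and then invoke the deformation theory of convex co-compact Kleinian groups (equivalently, the quasiconformal deformation theory of the associated Riemann surfaces) to push its geodesic length to zero. First I would fix a reference representation $\rho_0 \in {\cal CC}_0(\pi_1(M))$, so that $\Omega(\Gamma_0)/\Gamma_0$ is identified with $\partial M$ (here $\Gamma_0 = \rho_0(\pi_1(M))$), using that $M$ has incompressible, atoroidal boundary so there are no rank-one or rank-two parabolic subgroups and $\Omega(\Gamma_0)/\Gamma_0$ is a finite-area-free union of surfaces homeomorphic to $\partial M$. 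Since $C$ is freely homotopic to a simple curve $\gamma$ on $\partial M$, the conjugacy class of $C$ is represented by a simple closed curve $\gamma$ on the Riemann surface $\Omega(\Gamma_0)/\Gamma_0$.

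The key step is then to deform the conformal structure on $\Omega(\Gamma_0)/\Gamma_0$ within its Teichm\"uller space by pinching the simple curve $\gamma$: choosing a pants decomposition of $\partial M$ containing $\gamma$ and moving along the corresponding Fenchel--Nielsen length coordinate for $\gamma$ toward $0$ yields a sequence of conformal structures $x_n$ on $\partial M$ with $\ell^{\partial}_\gamma(x_n) \to 0$, where $\ell^{\partial}$ denotes hyperbolic length on the boundary surface. By the Ahlfors--Bers deformation theory (each such $x_n$ corresponds to a quasiconformal deformation of $\Gamma_0$, giving $\rho_n \in {\cal CC}_0(\pi_1(M))$ with $\Omega(\rho_n(\pi_1(M)))/\rho_n(\pi_1(M)) \cong (\partial M, x_n)$), we obtain the desired sequence $\{\rho_n\} \subset {\cal CC}_0(\pi_1(M))$. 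It remains to compare the length of the \emph{boundary} geodesic representing $\gamma$ on $(\partial M, x_n)$ with the length of the \emph{interior} closed geodesic $C^\ast$ in ${\bf H}^3/\rho_n(\pi_1(M))$. This is precisely the content of the cited results from Sections 2 and 3 of Maskit \cite{maskit-parabolics}: an element whose conformal-boundary length is small has small translation length as a loxodromic element of the Kleinian group (intuitively, as the boundary curve pinches, the corresponding loxodromic element limits to a parabolic, and translation length tends to $0$). Hence $\ell_C(\rho_n) \to 0$.

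I expect the main obstacle to be the last comparison step, i.e.\ controlling the $3$-dimensional geodesic length $\ell_C(\rho_n)$ by the $2$-dimensional boundary length $\ell^\partial_\gamma(x_n)$. One clean way to see the inequality in the direction we need is geometric: a curve freely homotopic to $\gamma$ on $\partial M$ has a representative lying on the boundary of the convex core of ${\bf H}^3/\rho_n(\pi_1(M))$, and the nearest-point retraction from the boundary of the $\varepsilon$-neighborhood of the convex core onto the convex core does not increase length by more than a bounded factor; combined with the fact that the induced (hyperbolic) metric on the convex core boundary dominates the metric on the component of $\Omega/\Gamma$ it faces (the convex core boundary is the pleated image of $\Omega/\Gamma$, which is $1$-Lipschitz in the relevant direction), one gets $\ell_{C^\ast}(\rho_n) \le \ell^\partial_\gamma(x_n)$ up to a universal multiplicative constant, so $\ell_C(\rho_n) \to 0$ as claimed. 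Since this is exactly the estimate provided by Maskit \cite{maskit-parabolics}, I would cite it rather than reprove it, but the geometric picture above is the one I would keep in mind while writing the details.
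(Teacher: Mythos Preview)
Your proposal is correct and follows essentially the same route as the paper: pinch the simple boundary curve in Teichm\"uller space, transport the resulting conformal structures to convex co-compact representations via Ahlfors--Bers, and then invoke a boundary-to-interior length comparison to conclude $\ell_C(\rho_n)\to 0$. The paper offers exactly this argument, citing Maskit \cite{maskit-parabolics} as you do and, as an alternative for the comparison step, McMullen's formulation of Bers' inequality (short on $\Omega/\Gamma$ implies short in ${\bf H}^3/\Gamma$); your geometric sketch of that step via the convex core boundary is the right picture, though in writing it up you should be careful with the direction of the Lipschitz inequality and simply cite Sullivan/McMullen or Maskit rather than rely on the informal ``$1$-Lipschitz'' claim.
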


Maskit's proof proceeds by constructing a sequence of quasiconformal deformations of $\partial M$ for which the length of $C$ on $\partial M$ goes to $0$.  These deformations necessarily give rise to a sequence $\{\rho_n\}\subset {\cal CC}_0(M)$ for which the length of $\rho_n(C)$ on $\partial M$ goes to $0$, which in turn forces $\ell_C(\rho_n)\rightarrow 0$.  We note here that the main focus of Maskit's work is to then establish the convergence of this sequence, which  requires additional hypotheses on $M$; however, we do not need here the convergence of the sequence, and so making use of the first part of Maskit's construction suffices to yield the desired sequence.  

In fact, Maskit's argument is but one variant of an observation that follows immediately from the identification of ${\cal CC}_0(\pi_1(M))$ with the Teichm\"uller space ${\cal T}(\partial M)$ of (marked) hyperbolic structures on $\partial M$.   Given a curve $C$ in $M$ homotopic to a simple curve (again called $C$) on $\partial M$, choose a sequence $\{ \rho_n\}\subset {\cal T}(\partial M)$ so that the length of $\rho_n (C)\rightarrow 0$, where here we are using the natural hyperbolic length on $\partial M = \Omega(\rho_n(\pi_1(M)))/\rho_n(\pi_1(M))$.   The existence of such a sequence follows immediately from the simplicity of $C$ on $\partial M$ and arguments similar to those given in the previous Section.  

We then use McMullen's formulation (see Proposition 6.4 and Corollary 6.5 of McMullen \cite{mcmullen-iteration}) of  Bers' inequality, which states that geodesics on $\Omega(\rho_n(\pi_1(M)))/\rho_n(\pi_1(M))$ which are short, are then homotopic to short geodesics in the hyperbolic $3$-manifold ${\mathbb H}^3/\rho_n(\pi_1(M))$, and so correspond to elements of $\rho_n(\pi_1(M))$ that are nearly parabolic.  In particular, we have that $\ell_C (\rho_n)\rightarrow 0$.    (Implicit in this latter argument is the use of the assumptions that $\partial M$ is incompressible and that we are working with convex co-compact representations to imply that the restrictions of the $\rho_n$ to the fundamental groups of the components of $\partial M$ yield quasifuchsian groups.)

\section{Topological joinery}
\label{joinery}

In this Section, we present the basic topological constructions that underlie the arguments we use to prove Theorem \ref{separating-by-length}.   The main result that underlies the discussion in this Section is Thurston's geometrisation theorem for Haken $3$-manifolds.

\begin{theorem} A compact, orientable, irreducible, atoroidal $3$-manifold $M$ with non-empty boundary is hyperbolizable. 
\label{thurston geometrisation}
\end{theorem}

With Theorem \ref{thurston geometrisation} in hand, we begin with the general discussion of the constructions we consider.  Let $M_1$ and $M_2$ be compact, hyperbolizable $3$-manifolds, so that in particular both $M_1$ and $M_2$ are orientable, irreducible and atoroidal.  Assume  that there exist components $S_1\subset\partial M_1$ and $S_2\subset\partial M_2$ whose genera satisfy ${\rm genus}(S_1) = {\rm genus}(S_2)\ge 2$.  Suppose that $S_1$ is incompressible in $M_1$ and that $S_2$ is incompressible and an-annular in $M_2$.  We allow the possibility that the $\partial M_k$ contain components beyond the $S_k$ for both $k=1$ and $k=2$.

Let $f: S_1\rightarrow S_2$ be any (orientation-reversing) homeomorphism.  We form a new compact, orientable $3$-manifold $N$ by gluing $M_1$ and $M_2$ along $S_1$ and $S_2$ using $f$;  that is, we take the disjoint union of $M_1$ and $M_2$ and then form $N$ by identifying $x\in S_1$ with $f(x)\in S_2$ inside this disjoint union.  Inside $N$, there is a distinguished surface, namely the image $S$ of $S_2 = f(S_1)$.  For ease of notation, we write $N = M_1\cup_f M_2$. 

Alternatively, we can consider the case where $S_1$ and $S_2$ are distinct components of the boundary of a single $3$-manifold $M$, all satisfying the same hypotheses as the $M_k$ above.  In this case, we glue $S_1$ to $S_2$ via $f$ to obtain a $3$-manifold $N = M \cup_f$.  As we will not use this case of the general construction to any significant extent, we present the proofs for the case above, noting that similar arguments apply in this case as well. 

The main question we consider in this Section is to determine conditions on $M_1$ and $M_2$, and on $S_1$ and $S_2$, under which $N$ is hyperbolizable and under which $N$ is acylindrical.  We will develop some finer constructions as well. 

We first note the straightforward fact that $N$ is irreducible, which follows directly from the incompressibility of the $S_k$ in $M_k$ by a standard innermost disc argument; we give a sketch of the proof here. Let $\Sigma$ be an embedded $2$-sphere in $N$.   Isotope $\Sigma$ so that $\Sigma\cap S$ is the finite union of disjoint, simple, closed loops.  Let $\gamma$ be an innermost one of these loops on $\Sigma$, meaning that one of the components of $\Sigma -\gamma$ contains no component of $\Sigma\cap S$.   Note that $\gamma$ bounds a closed disc $D$ in $\Sigma$, namely the closure in $\Sigma$ of the component of $\Sigma -\gamma$ which contains no component of $\Sigma\cap S$.   Since the interior of $D$ is disjoint from $S$, we have that $D$ is contained in $M_k$ for either $k=1$ or $k=2$.  Since $S_k$ is incompressible in $M_k$ by assumption, we see that $\partial D$ is a homotopically trivial loop in $M_k$.  So, we can isotope $D$ into $S_k$ and thereby get rid of $\gamma$.   Repeating this argument for each loop in $\Sigma\cap S$ in turn, working outward from innermost loops, we can isotope $\Sigma$ into either $M_1$ or $M_2$.  Since both $M_k$ are irreducible, we see that $\Sigma$ necessarily bounds a $3$-ball in $M_k$ and hence in $N$.  A similar argument shows that both of the $M_k$ are incompressible in $N$. 

We next note the equally straightforward fact that $N$ is atoroidal, which follows from the incompressibility of $S_1$ in $M_1$ and the incompressibility and an-annularity of $S_2$ in $M_2$; again, we provide a sketch of the proof.   Let $T$ be an incompressible torus in $N$, and isotope $T$ so that $S\cap T$ is the finite union of disjoint, simple, closed loops.  Again performing an innermost disc argument, we can isotope away all the loops in $S\cap T$ which bound a disc in either $S$ or $T$, and thus we can assume that all the loops in $S\cap T$ are homotopically non-trivial loops on both $S$ and $T$.  Since both $S$ and $T$ are embedded surfaces in $N$, the loops in $S\cap T$ are parallel on $T$.  Moreover, since $S$ separates $N$, there must be an even number of loops in $S\cap T$. 

If $S\cap T$ is empty, then $T$ is contained in $M_k$ for either $k =1$ or $k =2$, and thus by the atoroidality of the $M_k$, we have that $T$ is then homotopic in $M_k$ into $\partial M_k$.   Since $\partial N = (\partial M_1 \setminus \{ S_1\}) \cup (\partial M_2 \setminus \{ S_2\})$ and since neither $S_k$ is a torus, we then have that $T$ is homotopic in $N$ into $\partial N$. 

It remains only to consider the case that $S\cap T$ is non-empty.  Consider the closure $A$ of a component of $T \setminus \{ S\cap T\}$ contained in $M_2$.  Since the loops in $S\cap T$ are homotopically non-trivial on $T$, we see that $A$ is an annulus.  Since $T$ is incompressible in $N$, the boundary loops of $A$ are homotopically non-trivial loops in $\partial M_2$, and so $A$ is an incompressible annulus in $M_2$.  Since $S_2$ is an-anular in $M_2$, we can homotope $A$ into $\partial M_2$, and hence into $M_1$.   Doing this for every other component of $T \setminus \{ S\cap T\}$ starting from $A$, we can homotope all of $T$ into $M_1$.  Since $M_1$ is atoroidal, we can homotope $T$ into a toroidal component of $\partial M_1$, which is also a toroidal component of $\partial N$. 

Hence, we have shown the following.

\begin{theorem} Let $M_1$ and $M_2$ be compact, hyperbolizable $3$-manifolds with non-empty, incompressible, atoroidal boundary.  Assume that there exist components $S_k$ of $\partial M_k$ so that ${\rm genus}(S_1) = {\rm genus}(S_2)\ge 2$ and so that $S_2$ is an-annular in $M_2$.  Let $f: S_1\rightarrow S_2$ be any (orientation-reversing) homeomorphism.  The $3$-manifold $N = M_1\cup_f M_2$ is a compact, hyperbolizable $3$-manifold with incompressible, atoroidal boundary, and each $M_k$ is an incompressible $3$-submanifold of $N$. 
\label{basic construction}
\end{theorem}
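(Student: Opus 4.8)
The plan is to verify the hypotheses of Thurston's geometrization theorem (Theorem \ref{thurston geometrization}) for $N$ and then to read off the remaining assertions; in fact almost all of the substantive work has already been carried out in the discussion immediately preceding the statement, so the argument is largely a matter of assembling those observations.

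First, $N$ is compact, being built from the compact manifolds $M_1$ and $M_2$ by gluing a boundary component of one to a boundary component of the other along $f$, and it is orientable since fixed orientations on $M_1$ and $M_2$ patch to an orientation on $N$ because $f$ is orientation-reversing. The innermost-disc argument given above shows that $N$ is irreducible and that each $M_k$ is incompressible in $N$; since each $M_k$ is hyperbolizable and hence irreducible, this is precisely the statement that each $M_k$ is an incompressible $3$-submanifold of $N$ in the sense of Section \ref{topology basics}. The innermost-curve argument given above shows that $N$ is atoroidal; this is the one place where the an-annularity of $S_2$ in $M_2$, rather than merely its incompressibility, is used, namely to push a piece of an incompressible torus meeting $S$ in essential curves off of $S_2$ and hence the whole torus into $M_1$.

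It remains to examine $\partial N$. By construction $\partial N = (\partial M_1\setminus S_1)\sqcup(\partial M_2\setminus S_2)$, so each component of $\partial N$ is a component of some $\partial M_k$ other than $S_k$ and hence has genus at least two, since $M_k$ has atoroidal boundary; thus $\partial N$ is atoroidal. For incompressibility, a component $S'\subset\partial M_k\setminus S_k$ gives inclusions $\pi_1(S')\hookrightarrow\pi_1(M_k)\hookrightarrow\pi_1(N)$, the first because $\partial M_k$ is incompressible in $M_k$ and the second because $M_k$ is incompressible in $N$, so $S'$ is incompressible in $N$.

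Finally I would invoke geometrization. If $\partial N\ne\emptyset$, then $N$ is a compact, orientable, irreducible, atoroidal $3$-manifold with non-empty boundary, and Theorem \ref{thurston geometrization} gives that $N$ is hyperbolizable; if instead $\partial M_1=S_1$ and $\partial M_2=S_2$, so that $\partial N=\emptyset$, then $N$ contains the closed incompressible surface $S$ of genus at least two (since $\pi_1(S)=\pi_1(S_1)\hookrightarrow\pi_1(M_1)\hookrightarrow\pi_1(N)$), hence is a closed, irreducible, atoroidal Haken $3$-manifold with infinite fundamental group, and the closed case of Thurston's geometrization theorem for Haken manifolds again yields that $N$ is hyperbolic. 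In either case, assembling the facts above gives the theorem. The only genuine difficulty was the atoroidality of $N$ when a torus meets $S$ essentially, and that has already been dispatched above using the an-annularity hypothesis; everything else is bookkeeping.
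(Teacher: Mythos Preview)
Your proposal is correct and follows essentially the same approach as the paper: the paper establishes irreducibility, incompressibility of the $M_k$, and atoroidality of $N$ in the discussion immediately preceding the statement, and the theorem is then presented as a summary of those facts together with Thurston's geometrization theorem. You have simply made explicit two points the paper leaves implicit or sets aside---the verification that $\partial N$ is incompressible and atoroidal, and the closed case $\partial N=\emptyset$ (which the paper acknowledges but dismisses as not arising later)---so your argument is a slightly more complete version of the paper's own.
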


We note the possibility that $\partial M_k = S_k$ for either $k=1$ or $k=2$, or both, and hence that $N$ may be a closed 3-manifold.  However, this case will not occur in the arguments we make below.  

We are now in the position of being able to develop the machinery of topological joinery we will use in the following Sections.   We begin by noting the existence of the basic pieces we will use in our constructions.

\begin{theorem} [Section 3 of Fujii \cite{fujii}] For each $\sigma\ge 2$, there exists a compact, hyperbolizable, acylindrical $3$-manifold $M_\sigma$ for which $\partial M_\sigma$ is a closed, orientable surface of genus $\sigma$. 
\label{fujii}
\end{theorem}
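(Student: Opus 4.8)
The plan is to reduce to a purely topological construction and then invoke Theorem~\ref{thurston geometrization}. It suffices to produce, for each $\sigma\ge 2$, a compact, orientable, irreducible, atoroidal $3$-manifold $M_\sigma$ with non-empty boundary such that $\partial M_\sigma$ is a single closed orientable surface of genus $\sigma$ and $M_\sigma$ is acylindrical; hyperbolizability then follows immediately from Theorem~\ref{thurston geometrization}. The most economical way to secure the topological hypotheses together with acylindricity is to build $M_\sigma$ already carrying a hyperbolic metric with non-empty totally geodesic boundary. Such an $M_\sigma$ is automatically orientable, irreducible, and atoroidal, with incompressible boundary having no torus components, and it is acylindrical: doubling $M_\sigma$ across its totally geodesic boundary gives a closed hyperbolic $3$-manifold in which an essential annulus of $M_\sigma$ with both ends on $\partial M_\sigma$ would close up to an essential torus, contradicting the absence of essential tori in closed hyperbolic $3$-manifolds. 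So the whole problem reduces to: \emph{for every $\sigma\ge 2$, construct a compact $3$-manifold admitting a hyperbolic metric whose boundary is a connected totally geodesic surface of genus $\sigma$.}

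I would carry this out by assembling $M_\sigma$ from hyperbolic truncated tetrahedra, as in Section~3 of Fujii \cite{fujii}. A hyperbolic truncated tetrahedron is a compact polyhedron with four ``lateral'' faces and four geodesic ``truncation'' hexagons. One takes a finite family of copies and glues them in pairs along their lateral faces by orientation-reversing hyperbolic isometries, arranging the identifications so that every edge cycle closes up with total dihedral angle $2\pi$ -- for instance using copies of the regular truncated tetrahedron, whose dihedral angles are all $\pi/3$, so that each edge cycle has length $6$. The result is a genuine manifold $M$ whose interior is hyperbolic with totally geodesic boundary, the boundary being the closed surface obtained by gluing the truncation hexagons along their truncation edges. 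An Euler-characteristic count expresses the genus of $\partial M$ in terms of the number of tetrahedra and the combinatorics of the lateral gluing, and by varying these one arranges the boundary to be connected and of genus exactly $\sigma$ for any prescribed $\sigma\ge 2$. (If one prefers not to exhibit explicit dihedral angles, one instead checks directly that the underlying topological complex is irreducible, atoroidal, and acylindrical with connected boundary of genus $\sigma$, and then applies Theorem~\ref{thurston geometrization}.)

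An alternative, more homological construction cuts a closed hyperbolic $3$-manifold along an embedded totally geodesic surface. Begin with a closed hyperbolic $3$-manifold $W_0$ containing a connected, separating, embedded totally geodesic surface $F_0$ of genus $2$ -- for instance the double of a compact hyperbolic manifold with connected totally geodesic boundary of genus $2$, with $F_0$ the doubling surface. Since the first Betti number of such a double is positive and some loop on $F_0$ is non-trivial in $H_1(W_0)$ (the ``half lives, half dies'' principle), one can pass to finite cyclic covers $W\to W_0$ over which $F_0$ lifts to a connected totally geodesic surface $F$ of prescribed covering degree, hence of prescribed genus $(\mathrm{degree}) + 1$; one checks that $F$ remains connected and separating in $W$, so that cutting $W$ along $F$ produces a compact manifold $M_\sigma$ with connected totally geodesic boundary of genus $\sigma$, inheriting irreducibility and atoroidality from $W$.

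The step I expect to be the genuine obstacle in either approach is realising \emph{every} integer $\sigma\ge 2$ as the boundary genus. In the polyhedral construction this is the combinatorial problem of choosing the lateral-face identifications so that the truncation hexagons assemble into a connected surface of exactly the desired genus while all edge-angle conditions are satisfied; in the covering construction it is the group-theoretic problem of producing covers whose restricted degree over $F_0$ takes every positive value while the relevant component stays connected and separating. Once the genus is fixed, the remaining points -- irreducibility, atoroidality, incompressibility and an-annularity of $\partial M_\sigma$ -- are routine, with the last two automatic from the totally geodesic boundary.
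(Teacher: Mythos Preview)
The paper does not give its own proof of this theorem; it is stated as a citation to Fujii \cite{fujii} and used as a black box, with only the additional remark that Fujii in fact produces infinitely many such manifolds for each $\sigma$. Your first approach---assembling regular truncated hyperbolic tetrahedra with dihedral angle $\pi/3$ so that six meet around each edge, yielding a compact hyperbolic manifold with totally geodesic boundary---is precisely Fujii's construction, and your doubling argument for acylindricity is standard and correct. You are also right that the genuine content lies in arranging the combinatorics so that the truncation faces close up to a connected surface of any prescribed genus $\sigma\ge 2$; Fujii handles this explicitly, and it is not a triviality.

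Your second, covering-based route has a circularity you should flag: to produce the seed manifold $W_0$ you propose doubling a compact hyperbolic manifold with connected totally geodesic boundary of genus $2$, but the existence of such a manifold is exactly the $\sigma=2$ case of the theorem you are proving. You would need an independent source for the base case (which, of course, the truncated-tetrahedron construction supplies). Beyond that, the step where you pass to cyclic covers so that $F_0$ lifts to a connected surface of every degree is delicate: you need, for every $d\ge 1$, a homomorphism $\pi_1(W_0)\to{\bf Z}/d{\bf Z}$ whose restriction to $\pi_1(F_0)$ is surjective, and ``half lives, half dies'' gives you a single primitive class in $H_1(W_0)$ coming from $F_0$, which is enough, but you should say so explicitly rather than leave it as a gesture.
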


In fact, Fujii shows that for each $\sigma\ge 2$, there are infinitely many such $3$-manifolds.   We note that this is one particular instance of a collection of more general constructions.   Unfortunately, there is no good extant survey of all ways of constructing hyperbolic $3$-manifolds.  Such constructions go back at least to the following Theorem of Myers.

\begin{theorem} [Myers \cite{myers}] Let $M$ be a compact orientable $3$-manifold, so that no component of $\partial M$ has positive Euler characteristic.  Then, there exists a link $L\subset {\rm int}(M)$ so that $M^0 = M \setminus {\rm nbhd}(L)$ is Haken, atoroidal and acylindrical with incompressible boundary.
\label{myers}
\end{theorem}

Looking ahead to the end of the paper, Theorem \ref{myers} and Theorem \ref{comar dehn} allow for the construction of hyperbolizable, acylindrical $3$-manifolds in abundance. 

The following Lemma can be viewed as describing how to cap off components of the boundary of a compact, hyperbolizable $3$-manifold without introducing essential annuli.

\begin{lemma} Let $M$ be a compact, hyperbolizable $3$-manifold with non-empty, incompressible, atoroidal boundary, and let $S$ be a component of $\partial M$.  Let $P$ be a compact, hyperbolizable, acylindrical $3$-manifold with non-empty, incompressible, atoroidal boundary and let $B$ be a component of $\partial P$ satisfying ${\rm genus}(B) = {\rm genus}(S)$.  Let $f: S\rightarrow B$ be any orientation-reversing homeomorphism, and let $Q = M\cup_f P$.   Then $Q$ is a compact, hyperbolizable $3$-manifold with incompressible, atoroidal boundary, and there does not exist an essential annulus joining a component of $\partial M \setminus \{ S\}$ to a component of $\partial P\setminus \{ B\}$.  
\label{capping off single}
\end{lemma}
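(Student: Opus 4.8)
The plan is to build $Q = M \cup_f P$ and then verify the two assertions in turn: first the hyperbolizability (and the topological conclusions that come with it), then the non-existence of an essential annulus joining the two families of remaining boundary components. For the first assertion, I would simply invoke Theorem \ref{basic construction} with $M_1 = M$, $M_2 = P$, $S_1 = S$ and $S_2 = B$: the hypotheses of that theorem are exactly what we have assumed here (both pieces compact hyperbolizable with non-empty, incompressible, atoroidal boundary, matching genera at least two since ${\rm genus}(S) \geq 2$, and the second glued surface $B$ an-annular because $P$ is acylindrical). This immediately gives that $Q$ is compact, hyperbolizable, with incompressible, atoroidal boundary, and that $M$ and $P$ are incompressible $3$-submanifolds of $Q$; let $S^\ast$ denote the image of $B = f(S)$ in $Q$.

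For the second assertion, suppose for contradiction that $A$ is an essential annulus in $Q$ with one component of $\partial A$ on a component of $\partial M \setminus \{S\}$ and the other on a component of $\partial P \setminus \{B\}$. The strategy is the standard cut-and-paste along $S^\ast$: isotope $A$ so that $A \cap S^\ast$ is a finite disjoint union of simple closed curves, and run an innermost-disc argument. Any loop of $A \cap S^\ast$ bounding a disc in $A$ or in $S^\ast$ can be removed using incompressibility of $S^\ast$ (equivalently, of $P$ and $M$) in $Q$ exactly as in the proof of Theorem \ref{basic construction}; after this we may assume every loop of $A \cap S^\ast$ is essential on both $A$ and $S^\ast$, hence is a core curve of $A$. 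Since $\partial A$ has one endpoint circle in $M$ and one in $P$, the set $A \cap S^\ast$ must be non-empty, and it chops $A$ into an odd number of sub-annuli alternating between $M$ and $P$. Consider the sub-annulus $A'$ of $A$ lying in $P$ and meeting $S^\ast$; its boundary consists of essential loops, one on $S^\ast \cong B$ and one on $\partial P \setminus \{B\}$ (or on $S^\ast$ again if there are more loops). The key point is that $P$ is acylindrical, so $\partial P$ is an-annular: every sub-annulus of $A$ lying in $P$ can be homotoped (rel the structure of the decomposition) into $\partial P$, and in particular into $S^\ast$, allowing us to push all of $A$ off $S^\ast$ into $M$. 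But then $A$ becomes an essential annulus in $M$ with both boundary components on $\partial M \setminus \{S\}$ — wait, one must check that the homotopies do not create a boundary component lying on $S$; since $S^\ast$ is being removed as a boundary possibility by acylindricity of $P$, the resulting annulus in $M$ has both ends on $\partial M \setminus \{S\}$, contradicting nothing directly, so here we must instead argue that $A$ was inessential to begin with, because an annulus joining two distinct boundary components of $P$ across $S^\ast$ cannot exist in the acylindrical manifold $P$.

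Let me restate the core argument more carefully, since that is where the subtlety lies. After reducing to the case that all loops of $A \cap S^\ast$ are essential core curves, pick an outermost sub-annulus $A'$ of $A$ in $P$, i.e.\ one with one boundary circle on $\partial P \setminus \{B\}$ and the other on $S^\ast$. Because $P$ is acylindrical and $A'$ is essential (its boundary loops are essential on $\partial P$ since $A$ is essential and $S^\ast$, $\partial P \setminus \{B\}$ are incompressible), $A'$ must be boundary-parallel in $P$; but its two boundary components lie on distinct components of $\partial P$ (namely $B$ and some component of $\partial P \setminus \{B\}$), so boundary-parallelism would force those two components to be isotopic in $\partial P$, which is absurd for distinct components. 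This contradiction shows no such $A'$, and hence no such $A$, exists. The main obstacle — and the step requiring the most care — is precisely this last manipulation: correctly bookkeeping the parities and the boundary-component assignments of the sub-annuli of $A$ cut along $S^\ast$, and ensuring that the acylindricity of $P$ (which forbids essential annuli with both ends anywhere on $\partial P$) is applied to a genuinely essential sub-annulus rather than an inessential one that could be removed earlier. Everything else is a routine innermost-disc / innermost-loop argument of the kind already carried out twice in Section \ref{joinery}.
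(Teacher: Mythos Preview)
Your proposal is correct and, once you arrive at the restated argument in the final paragraph, it is essentially the same as the paper's proof: invoke Theorem~\ref{basic construction} for the first assertion, then for the second assertion reduce $A\cap S^\ast$ to essential core curves and observe that the sub-annulus of $A$ in $P$ adjacent to the boundary circle $a_1\subset\partial P\setminus\{B\}$ is an essential annulus in $P$, contradicting acylindricity. The paper states this last step in one sentence (``Taking the loop closest to $a_1$ along $A$ then yields an essential annulus in $P$''), whereas you spell out why this sub-annulus is not boundary-parallel (its two boundary circles lie on distinct components of $\partial P$); your meandering second paragraph can simply be deleted, as the attempted push-into-$M$ argument there is unnecessary and, as you noticed, does not lead anywhere.
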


\begin{proof} The existence of $P$ follows immediately from Theorem \ref{fujii}.  By Theorem \ref{basic construction}, we see that $Q$ is hyperbolizable, and that $\partial Q = (\partial M \setminus \{ S\}) \cup (\partial P \setminus \{ B\})$ is necessarily incompressible and atoroidal.  It remains only to consider possible essential annuli in $Q$.  We use an argument similar to the argument given above showing that the $3$-manifolds produced as in Theorem \ref{basic construction} are atoroidal.  

So, suppose that there exists an essential annulus $A$ in $Q$ with $\partial A = a_0\cup a_1$, where $a_0\subset \partial M \setminus \{S\}$ and $a_1\subset\partial P \setminus \{ B\}$.  Homotop $A$ in $Q$ so that $A$ intersects the incompressible surface $B = f(S)$ in a collection of disjoint, simple, homotopically non-trivial closed loops.  Since $A$ is incompressible, these loops are non-trivial on $B$.  Taking the  loop closest to $a_1$ along $A$ then yields an essential annulus in $P$, contrary to the assumption that $P$ is acylindrical. 
\end{proof}

We note that the constructed $3$-manifold $Q$ will contain any pre-existing essential annuli joining components of $\partial M \setminus \{ S\}$ to one another, although all such annuli must be contained entirely in $M$.

We next show that there exist $3$-manifolds $M$ with a single essential annulus, where we have control over how the boundary components of this annulus intersect $\partial M$.  We do this in two steps, one for the case of a separating annulus and one for a non-separating annulus.  

Recall that up to homeomorphism, a curve $C$ on a closed, orientable surface $S$ is determined either by the property of being non-separating (as all simple, non-separating curves are equivalent up to homeomorphisms of the surface) or by the genera of the components of $S \setminus \{ C\}$ (as two simple, separating curves are equivalent up to homeomorphism of the surface if and only if the genera of their complements are equal).

\begin{lemma} Given $\sigma\ge 2$, there exists a compact, hyperbolizable $3$-manifold $M$ with non-empty, incompressible, atoroidal boundary so that $\partial M = S_1\cup S_2$ with ${\rm genus}(S_1) = \sigma$, there exists an essential annulus $A\subset M$ with one boundary component in $S_1$ and one boundary component in $S_2$, the boundary component in $S_1$ is non-separating, and every essential annulus in $M$ is homotopic to $A$. 
\label{capping non separating}
\end{lemma}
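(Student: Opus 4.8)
The plan is to build $M$ in two stages: first produce an "annulus core" — a small compact $3$-manifold carrying exactly one essential annulus of the desired separation type — and then cap off its extra boundary components using Lemma \ref{capping off single} so that no new essential annuli are created. For the core, the natural candidate is a thickened surface: take a closed orientable surface $\Sigma$ of genus $\sigma$ together with a simple non-separating curve $c\subset\Sigma$, and inside $\Sigma\times[-1,1]$ consider the complement $M_0$ of an open regular neighbourhood of $c\times\{0\}$. The annulus $A_0 = (\text{a transverse arc through }c)\times[-1,1]$ — more precisely, the frontier of the removed solid-torus neighbourhood, restricted appropriately — gives an essential annulus with one boundary loop on $\Sigma\times\{1\}$ and one on $\Sigma\times\{-1\}$, both freely homotopic to $c$ and hence non-separating. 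A cleaner version: let $M_0$ be an $I$-bundle over $\Sigma$ with one handle drilled, or simply observe that $\Sigma\times[0,1]$ with an unknotted (in the product sense) arc neighbourhood removed along $c$ has characteristic submanifold a single $I$-bundle piece, so by Johannson--Jaco--Shalen theory the only essential annuli up to homotopy are the vertical ones, which are all parallel to $A_0$ because $c$ is a single curve. I would need to verify $M_0$ is hyperbolizable via Theorem \ref{thurston geometrization}: it is orientable and, being (essentially) a surface-times-interval with a handle removed, one checks irreducibility and atoroidality directly, or one instead starts from Fujii's acylindrical pieces and glues in a single controlled annulus.

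The cleaner route, which I would actually carry out, avoids $I$-bundles: start with two copies $P_1,P_2$ of an acylindrical block from Theorem \ref{fujii}, each with a genus-$\sigma$ boundary component $B_1,B_2$, plus additional boundary, and glue them along an annular neighbourhood construction so that exactly one essential annulus survives. Concretely, pick $P_1,P_2$ acylindrical with $\partial P_i = B_i \sqcup C_i$, ${\rm genus}(B_i)=\sigma$, and let $A$ be a small essential annulus introduced by gluing $B_1$ to $B_2$ along a homeomorphism that identifies them away from an annular region — i.e. cut $B_1$ and $B_2$ along a single non-separating curve $c$, and reglue. Since each $P_i$ is acylindrical, every essential annulus in the result must cross the gluing surface, and an innermost-annulus argument exactly as in the proof of Lemma \ref{capping off single} forces it to lie in the gluing region, hence be parallel to $A$; the non-separating nature of the boundary loop on the outer genus-$\sigma$ boundary component is arranged by choosing $c$ non-separating in $B_1$. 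One then checks that all pre-existing annuli in the $P_i$ have been destroyed because $B_i$ is no longer a boundary component, leaving $A$ as the unique essential annulus up to homotopy.

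The key steps, in order, are: (1) construct the raw piece with at least one essential annulus of non-separating type joining two boundary components, using Fujii's acylindrical manifolds (Theorem \ref{fujii}) glued along a controlled cut-and-reglue so that the outer boundary is $S_1\sqcup S_2$ with ${\rm genus}(S_1)=\sigma$ and the $S_1$-boundary loop of $A$ is non-separating; (2) apply Theorem \ref{basic construction} to conclude the result is compact, hyperbolizable with incompressible atoroidal boundary, and that the acylindrical pieces are incompressible submanifolds; (3) run the innermost-annulus argument from Lemma \ref{capping off single}'s proof to show any essential annulus is homotopic into the gluing region and hence to $A$; and (4) use Lemma \ref{capping off single} repeatedly to cap off any unwanted extra boundary components without creating new essential annuli, being careful that the capping does not affect $S_1$ or $S_2$.

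The main obstacle I anticipate is step (3): ensuring that the construction really produces a unique essential annulus up to homotopy, rather than a whole family of non-parallel ones. The danger is that the cut-and-reglue along $c$ could introduce additional essential annuli running "the other way" through the gluing surface, or that $c$ being non-separating allows annuli whose core is $c^k$ or some other curve disjoint from $c$. Handling this cleanly requires either an appeal to the characteristic submanifold machinery of Jaco--Shalen and Johannson (identifying the characteristic submanifold of the glued-up manifold as a single solid-torus or $I$-bundle piece neighbouring $A$) or a careful hands-on argument tracking how an incompressible annulus meets the gluing surface and using the acylindricity of the building blocks to push it entirely into the collar of $A$. I expect the cleanest exposition uses the former; the annularity computations are otherwise routine given Lemma \ref{capping off single} as a template.
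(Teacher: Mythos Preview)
Your second construction---the one you say you would ``actually carry out''---does not produce the boundary pattern the lemma demands. If $P_1,P_2$ are acylindrical with $\partial P_i = B_i \sqcup C_i$ and you identify $B_1\setminus N(c_1)$ with $B_2\setminus N(c_2)$ for non-separating curves $c_i$, the leftover annuli $N(c_1)$ and $N(c_2)$ share both boundary circles and close up to a \emph{torus} in $\partial M$, so the boundary is not atoroidal. If instead you mean to glue only the annular neighbourhoods $N(c_1)\cong N(c_2)$ (as in the separating case), then because each $B_i\setminus N(c_i)$ is connected, the pieces coming from $B_1$ and $B_2$ assemble into a \emph{single} surface of genus $2\sigma-1$, and the essential annulus $A$ has both boundary circles in that one component; it does not join two distinct components $S_1,S_2$ as required. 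Your first construction has the same defect: drilling $c\times\{0\}$ from $\Sigma\times[-1,1]$ leaves a torus frontier, not an annulus.

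The idea you are missing is to use the \emph{self-gluing} variant $P\cup_f$ on a single acylindrical block. Take $P$ from Theorem~\ref{fujii} with connected boundary $S$ of genus $\sigma+1$, and choose two disjoint \emph{separating} curves $a,b\subset S$ so that $S\setminus(a\cup b)$ has three pieces: two of genus~$1$ with one boundary circle each, and one piece $Y$ of genus $\sigma-1$ with two boundary circles $a_0,b_0$. Now glue an annular neighbourhood of $a$ to one of $b$, matching $a_0$ to $b_0$. The piece $Y$ closes up to a genus-$\sigma$ surface $S_1$, and the image of $a$ (now identified with $b$) is \emph{non-separating} in $S_1$ precisely because cutting $S_1$ along it recovers the connected surface $Y$; the two genus-$1$ pieces glue to a genus-$2$ surface $S_2$. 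So the non-separating boundary loop arises not by starting with a non-separating curve, but by identifying two separating curves that co-bound a connected subsurface. With this construction in hand, your steps (2)--(3) go through verbatim, the acylindricity of $P$ driving the innermost-annulus argument, and no capping step (4) is needed at all.
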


\begin{proof}  By Theorem \ref{fujii}, there exists a compact, hyperbolizable, acylindrical $3$-manifold $P$ so that $\partial P$ consists of a single incompressible surface $S$ of genus $\sigma + 1$.  Let $a$ and $b$ be disjoint, simple, closed loops on $S$ so that each of $a$ and $b$ is separating and so that $S \setminus \{ a\cup b\}$ consists of three surfaces, two of genus $1$ with a single boundary component and one of genus $\sigma-1$ with two boundary components.  Let $A$ and $B$ be embedded annular neighborhoods of $a$ and $b$, respectively, with $\partial A = \{ a_0, a_1\}$ and $\partial B = \{ b_0, b_1\}$.  Choose the labels so that $a_0$ and $b_0$ bound the surface of genus $\sigma -1$ with two boundary components.  

Let $f: A\rightarrow B$ be a (orientation-reversing) homeomorphism so that $f(a_0) = b_0$ and $f(a_1) = b_1$.  The manifold $M = P \cup_f$ that results from $P$ by gluing $A$ to $B$ via $f$ is then a compact, orientable $3$-manifold with $\partial M$ consisting of two surfaces, one of genus two and one of genus $\sigma$, and $M$ contains an essential annulus which is the image of $A = f(B)$ under the gluing.  

By construction, this essential annulus $A\subset M$ has one boundary component in each component of $\partial M$, and the boundary component of $A$ in the component of $\partial M$ having genus $\sigma$ is non-separating. 

The same argument as given above shows that $M$ is irreducible and atoroidal, hence hyperbolizable.  It remains only to show that $M$ contains only the single essential annulus $A$.  However, this follows directly from the assumption that $P$ is acylindrical, along with the same style of argument given several times already.
\end{proof}

\begin{lemma} Given $\sigma\ge 2$ and $1\le k < \sigma$, there exists a compact, hyperbolizable $3$-manifold $M$ with non-empty, incompressible, atoroidal boundary so that $\partial M = S_1\cup S_2$ with ${\rm genus}(S_1) = \sigma$,  there exists an essential annulus $A\subset M$ with one boundary component in $S_1$ and one boundary component in $S_2$, the genera of the components of $S_1 \setminus \partial A$ are $k$ and $\sigma -k$, and every essential annulus in $M$ is homotopic to $A$. 
\label{capping separating}
\end{lemma}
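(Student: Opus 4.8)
The plan is to mirror the construction in the proof of Lemma~\ref{capping non separating}, redistributing the genus among the complementary pieces so that the relevant boundary circle of the glued annulus becomes separating with the prescribed complementary genera. By Theorem~\ref{fujii}, fix a compact, hyperbolizable, acylindrical $3$-manifold $P$ whose boundary $\partial P = S$ is a single incompressible surface of genus $\sigma + 1$ (here $\sigma + 1 \ge 3$, so Theorem~\ref{fujii} applies). On $S$, choose two disjoint, separating simple closed curves $a$ and $b$ so that $S \setminus \{a\cup b\}$ is a chain of three subsurfaces: a surface $H_1$ of genus $k$ with one boundary component (bounded by $a$), a surface $W$ of genus $1$ with two boundary components (bounded by $a$ and by $b$), and a surface $H_2$ of genus $\sigma - k$ with one boundary component (bounded by $b$). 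Such a configuration exists exactly because $k \ge 1$ and $\sigma - k \ge 1$, and the genus count $k + 1 + (\sigma - k) = \sigma + 1$ is consistent.

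Next I would take embedded annular neighborhoods $A_0$ of $a$ and $B_0$ of $b$ in $S$, with $\partial A_0 = \{a_0, a_1\}$ and $\partial B_0 = \{b_0, b_1\}$, labeled so that $a_0$ bounds $H_1$, $b_0$ bounds $H_2$, and both $a_1$ and $b_1$ bound $W$. Let $f : A_0 \to B_0$ be an orientation-reversing homeomorphism with $f(a_0) = b_0$ and $f(a_1) = b_1$, and form $M = P \cup_f$. Then $\partial M$ has two components. One, $S_1$, is obtained from $H_1 \cup H_2$ by identifying $a_0$ with $b_0$; it is a closed surface of genus $\sigma$ in which the image $c_1$ of $a_0$ is separating, with complementary pieces copies of $H_1$ and $H_2$ and hence of genera $k$ and $\sigma - k$. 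The other, $S_2$, is obtained from $W$ by identifying its two boundary circles; it is a closed surface of genus $2$. The image $A$ of $A_0$ is an essential annulus in $M$ with one boundary component equal to $c_1 \subset S_1$ and the other contained in $S_2$, as required.

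It then remains to check two things: that $M$ is hyperbolizable with non-empty, incompressible, atoroidal boundary, and that every essential annulus in $M$ is homotopic to $A$. For the first, the innermost-disc arguments used repeatedly above show that $M$ is irreducible, that $\partial M = S_1 \cup S_2$ is incompressible, and that $M$ is atoroidal (both $S_1$ and $S_2$ have genus at least $2$, so $\partial M$ has no torus component), whence $M$ is hyperbolizable by Theorem~\ref{thurston geometrization}. For the second, I would argue as in the proof of Lemma~\ref{capping non separating}: given an essential annulus $A'$, homotope it to meet $A$ transversally, discard the inessential intersection circles (the core of $A$ is nontrivial in $\pi_1(M)$ because $S$ is incompressible in $P$), and use that cutting $M$ along $A$ recovers the acylindrical manifold $P$ to force every resulting piece of $A'$ to be boundary-parallel, so that $A' \simeq A$.

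The only non-formal point is the verification that the chosen labeling really does make $c_1$ separating with complementary genera $k$ and $\sigma - k$: the difference from Lemma~\ref{capping non separating} lies entirely in which boundary circle of each annular neighborhood faces the large-genus subsurfaces, and one uses the elementary fact that identifying one boundary circle of a connected surface to a boundary circle of a second connected surface produces a separating curve (with genera adding along the chain), in contrast to the non-separating case, where the two boundary circles of the central subsurface are identified with each other. There is no new geometric input beyond Theorem~\ref{fujii} and the acylindricity arguments already in play.
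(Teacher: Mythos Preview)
Your construction is correct, and it differs from the paper's. The paper proves Lemma~\ref{capping separating} by taking \emph{two} acylindrical manifolds $M_1$, $M_2$ (each with connected boundary of genus~$\sigma$), choosing separating curves $a_k\subset\partial M_k$ with complementary genera $k$ and $\sigma-k$, and gluing annular neighbourhoods with a flip (the genus-$k$ side of $\partial M_1$ to the genus-$(\sigma-k)$ side of $\partial M_2$); the result has both boundary components of genus~$\sigma$. You instead self-glue a single acylindrical manifold with boundary of genus $\sigma+1$, exactly as in Lemma~\ref{capping non separating}, but redistribute the genus so that the two outer pieces carry genera $k$ and $\sigma-k$ while the middle piece has genus~$1$; your $S_2$ then has genus~$2$. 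Since the statement imposes no condition on ${\rm genus}(S_2)$, both constructions satisfy the conclusion. Your route has the virtue of being a direct reparametrisation of the proof of Lemma~\ref{capping non separating}, so that the irreducibility, atoroidality, incompressibility, and uniqueness-of-annulus arguments carry over verbatim (cutting along $A$ returns the acylindrical $P$); the paper's route, using an amalgamation rather than an HNN-type gluing, yields the mild bonus that the second boundary component also has genus~$\sigma$, though this plays no role downstream.
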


\begin{proof} By Theorem \ref{fujii}, there exist compact, orientable, acylindrical $3$-manifolds $M_1$ and $M_2$ so that $\partial M_k$ consists of a single incompressible surface $S_k$ of genus $\sigma$.  Let $a_k$ be a simple, closed loop on $S_k$ so that the two components of $S_k \setminus \{ a_k\}$ have genera $k$ and $\sigma -k$, and let $A_k$ be an annular neighborhood of $a_k$.  Label the boundary components of $A_k$ as $b_k$ and $c_k$, where $b_k$ bounds the component of $S_k \setminus \{ a_k\}$ of genus $k$.  

Let $f: A_1\rightarrow A_2$ be a (orientation-reversing) homeomorphism satisfying $f(b_1) = c_2$ and $f(b_2) = c_1$.  The manifold $M = M_1 \cup_f M_2$ that results from $M_1$ and $M_2$ by gluing $A_!$ to $A_2$ via $f$ is then a compact, orientable $3$-manifold with $\partial M$ consisting of two surfaces, both of genus $\sigma$, so that $M$ contains an essential annulus which is the image of $A = f(B)$ under the gluing.  

By construction, this essential annulus has one boundary component in each component of $\partial M$ and both components of $\partial A$ decompose their respective boundary components into two subsurfaces of genera $k$ and $\sigma -k$. 

The same argument as given above shows that $M$ is irreducible and atoroidal, hence hyperbolizable.  It remains only to show that $M$ contains only the single essential annulus $A$.  However, this follows directly from the assumption that $M_1$ and $M_2$ are acylindrical, along with the same style of argument given several times already.
\end{proof}

We note here that the proof of the uniqueness of the essential annulus in both Lemmas \ref{capping non separating} and \ref{capping separating} can also be shown either as an immediate consequence of the characteristic submanifold theory of Jaco--Shalen and Johannson (see Canary and McCullough \cite{canary mccullough} for a discussion of the characteristic submanifold theory as it specifically relates to hyperbolizable $3$-manifolds) or by considering the relationship between essential annuli and intersections of subgroups stabilizing components of the domain of discontinuity of a Kleinian group uniformizing $M$, using arguments of Maskit \cite{maskit intersection}.  For the characteristic submanifold theory argument, the essential point is that the base $3$-manifolds for these two constructions are assumed to be acylindrical.  Therefore, the characteristic submanifold of the glued manifold in both cases is the solid torus which results from thickening the annulus along which the gluing is done, and all essential annuli in $M$ are homotopic into this solid torus.  

We are now ready to bring these Lemmas together to show that we can isolate any curve in $M$ homotopic to a simple curve on $\partial M$ from all other curves in $M$ by embedding $M$ into a larger hyperbolizable $3$-manifold $N$.

\begin{proposition} Let $M$ be a compact, hyperbolizable $3$-manifold with non-empty, incompressible, atoroidal boundary and let $C$ be a curve in $M$ freely homotopic to a simple curve in $\partial M$.  Assume that $M$ is not an $I$-bundle over a surface.  There exists a compact, hyperbolizable, acylindrical $3$-manifold $N$ with non-empty, incompressible, atoroidal boundary so that $N$ contains $M$ as an incompressible $3$-submanifold, the curve $C$ is freely homotopic to a unique simple curve on $\partial N$, and if $C' \ne C$ is any other curve in $M$, then $C'$ is not freely homotopic into $\partial N$. 
\label{final capping off}
\end{proposition}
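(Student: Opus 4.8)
The plan is to obtain $N$ by capping off \emph{every} component of $\partial M$: the component carrying $C$ is capped with one of the ``single essential annulus'' pieces of Lemma~\ref{capping non separating} or Lemma~\ref{capping separating}, chosen so that its annulus carries $C$ across to a fresh boundary component of $N$, while every other component of $\partial M$ is capped with an acylindrical plug from Theorem~\ref{fujii}.

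By the discussion of Section~\ref{curves basics}, $C$ is freely homotopic to a simple curve $c$ on a single component $S$ of $\partial M$, and $\sigma := {\rm genus}(S)\ge 2$. By the classification of simple closed curves up to homeomorphism recalled in Section~\ref{joinery}, $c$ is either non-separating in $S$ or separates $S$ into subsurfaces of genera $k$ and $\sigma-k$ with $1\le k<\sigma$. Let $M_c$ be the manifold produced, in the respective case, by Lemma~\ref{capping non separating} or by Lemma~\ref{capping separating} with this $k$; thus $\partial M_c = S_1\cup S_2$ with ${\rm genus}(S_1)=\sigma$ and ${\rm genus}(S_2)\ge 2$, there is an essential annulus $A\subset M_c$ one of whose boundary circles, $\gamma_1\subset S_1$, has the same topological type in $S_1$ as $c$ has in $S$, while the other is $\gamma_2\subset S_2$, and every essential annulus in $M_c$ is homotopic to $A$. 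Fix an orientation-reversing homeomorphism $f:S\to S_1$ with $f(c)=\gamma_1$. Next, for each component $S_0$ of $\partial M$ other than $S$, use Theorem~\ref{fujii} to get a compact, hyperbolizable, acylindrical $3$-manifold $P_{S_0}$ with boundary a single closed surface of genus ${\rm genus}(S_0)$, and glue $P_{S_0}$ to $M$ along it; that boundary surface is an-annular in $P_{S_0}$, so Theorem~\ref{basic construction} makes $M'$, the result of all these gluings, a compact, hyperbolizable $3$-manifold with incompressible, atoroidal boundary $\partial M'=S$ in which $M$ sits as an incompressible $3$-submanifold. Finally put $N:=M'\cup_f M_c$, so $\partial N=S_2$, a single closed surface of genus $\ge 2$; then $\partial N$ is non-empty, incompressible and atoroidal (incompressibility and irreducibility of $N$, and incompressibility of $M'$, hence of $M$, in $N$, follow from the innermost-disc argument of Section~\ref{joinery}, which needs only incompressibility of $S$ in $M'$ and in $M_c$; atoroidality, and with it hyperbolizability by Theorem~\ref{thurston geometrization}, follow from the innermost-torus argument of Section~\ref{joinery}, using in place of an-annularity the fact that the unique essential annulus $A$ of $M_c$ has one endpoint on $S_2$, which no torus cut along $S$ can reach). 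The annulus $A$ exhibits $\gamma_2\subset S_2=\partial N$ as freely homotopic in $N$ to $\gamma_1=f(c)=c$, hence to $C$; so $C$ is freely homotopic to a simple curve on $\partial N$.

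It remains to prove $N$ acylindrical and to deduce the statements about curves of $M$; this is the heart of the argument. For acylindricity I would run the innermost-annulus argument: given an essential annulus $A'$ in $N$, make it transverse to $S$ and to the surfaces $\partial P_{S_0}$, discard trivial intersection curves, and take a sub-annulus of $A'$ outermost with respect to these surfaces; if it lies in some $P_{S_0}$ it is boundary-parallel there (acylindricity), and if it lies in $M_c$ it is either homotopic to $A$ --- impossible, since $A$ has an endpoint on $S_1$, now interior, whereas the sub-annulus has both ends on $\partial N=S_2$ --- or boundary-parallel; in either case it can be pushed off, reducing the intersection, until $A'$ lies in $M'$, where its boundary would have to lie on $\partial M'\cap\partial N=\emptyset$, a contradiction. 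The delicate case, which I expect to be the main obstacle, is a sub-annulus of $A'$ lying in $M'$ with both ends on $S$: here I would use (i) that $M'$ contains no essential annulus with both boundary circles isotopic to $c$ on $S$, since closing such an annulus up with the intervening sub-annulus of $S$ would produce an incompressible, non-boundary-parallel torus in the atoroidal manifold $M'$; and (ii) that the only boundary circle of $A$ on $S_1$ is $\gamma_1=f(c)$, so a sub-annulus of $A'$ entering $M_c$ along a curve not isotopic to $c$ is boundary-parallel in $M_c$ and can be pushed back; bookkeeping how $A'$ can weave between $M'$ and $M_c$ --- the only route through $M_c$ between the two sides of $S$ being along $A$ --- then forces $A'$ to be trivial. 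The hypothesis that $M$ is not an $I$-bundle is what keeps this bookkeeping from running forever: for an $I$-bundle the whole interior is swept by essential annuli and the construction degenerates to the surface case.

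Granting that $N$ is acylindrical, uniqueness of the simple curve on $\partial N$ freely homotopic to $C$ is immediate, since two distinct such curves would cobound an essential annulus in the acylindrical manifold $N$. Finally, suppose $C'\ne C$ is a curve of $M$ freely homotopic in $N$ into $\partial N=S_2$, say $C'\simeq\delta$ for a curve $\delta$ on $S_2$; then $\delta$ is freely homotopic in $N$ to a curve meeting the $M'$-side of $S$, and the only route for this across $M_c$ is along $A$, so $\delta\simeq\gamma_2\simeq c\simeq C$ and hence $C'\simeq_N C$. Viewing $\pi_1(N)$ as the fundamental group of the graph of groups with central vertex $\pi_1(M)$ and leaves $\pi_1(M_c)$ and the $\pi_1(P_{S_0})$, a standard Bass--Serre analysis --- using once more that the unique essential annulus of $M_c$ joins $c$ to the free boundary $S_2$, so that no reduced conjugating element can leave $\pi_1(M)$ and return productively --- shows that $C'$ and $C$ are conjugate already in $\pi_1(M)$, contradicting $C'\ne C$ and completing the proof.
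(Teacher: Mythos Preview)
Your construction is exactly the paper's: cap every boundary component of $M$ other than the one carrying $C$ with an acylindrical Fujii plug (the paper does this via repeated applications of Lemma~\ref{capping off single}), then attach the single-annulus piece from Lemma~\ref{capping non separating} or~\ref{capping separating} to the remaining component, matching its distinguished boundary circle to $c$; the acylindricity of $N$ and the statement about curves both come from the uniqueness of that annulus.

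Two small points of cleanup. First, your innermost-annulus paragraph is slightly tangled: you cannot push $A'$ ``until $A'$ lies in $M'$'', because $\partial A'\subset S_2\subset M_c$ never meets $M'$. The correct endgame is the one you yourself call the ``delicate case'': after pushing off boundary-parallel pieces the intersection $A'\cap S$ has at most two circles, the two outer sub-annuli in $M_c$ are forced to be copies of $A$ (so their $S_1$-ends are isotopic to $c$), and the middle sub-annulus in $M'$ is an essential $c$-to-$c$ annulus which your torus argument kills. Second, your explanation of the $I$-bundle hypothesis is off --- nothing in this particular argument ``runs forever'' for an $I$-bundle, and the paper does not invoke the hypothesis anywhere in its proof of this proposition; it is carried only to match the standing hypothesis of Theorem~\ref{separating-by-length}, the $I$-bundle case being the surface case handled separately. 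Your final Bass--Serre sketch (malnormality of the edge groups in the non-$M$ vertex groups forces a conjugacy in $\pi_1(N)$ between $C'$ and a boundary element to descend to a conjugacy in $\pi_1(M)$ to a power of $c$) is the honest way to justify what the paper dispatches in a sentence.
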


\begin{proof} Write $\partial M = S_1\cup \cdots \cup S_n$ and recall that by assumption, each $S_k$ satisfies ${\rm genus}(S_k)\ge 2$.   By relabelling if necessary, let $S_1$ be a component of $\partial M$ into which $C$ is freely homotopic; if $C$ is freely homotopic into more than one component of $\partial M$, then choose one to be $S_1$.

By Theorem \ref{fujii}, there exists a collection $M_2,\ldots, M_n$ of compact, hyperbolizable, acylindrical $3$-manifolds, each with connected, incompressible boundary, each of which satisfies ${\rm genus}(S_k) = {\rm genus}(\partial M_k)$ for $2\le k\le n$.  Apply Lemma \ref{capping off single} $n-1$ times to construct a compact, hyperbolizable $3$-manifold $P$ so that $\partial P = S_1$ is incompressible in $P$.   Note that $C$ remains freely homotopic into $\partial P$.

Even accepting the possibility that $C$ is freely homotopic to multiple curves on $\partial P = S_1$, we know that should this occur, these curves in $S_1$ are disjoint.  (We also know that there cannot exist two essentially different homotopies of $C$ to the same curve on $\partial M$, as this would give rise to an essential torus in $M$ that cannot exist.)  So, as above, choose one.  We now apply either Lemma \ref{capping separating} or Lemma \ref{capping non separating} as appropriate to construct a compact, hyperbolizable, acylindrical $3$-manifold $N$ so that $C$ is freely homotopic to a unique curve in $\partial N$ but no other curve in $M$ is homotopic into $\partial N$.

As in the proof of Theorem \ref{basic construction}, the incompressibility of $\partial M$ implies that $M$ is an incompressible $3$-submanifold of $N$.  To see that $N$ is acylindrical, assume otherwise.  The boundary curves of any essential annulus in $N$ are disjoint curves in $\partial N$.  However, by the uniqueness of the essential annulus in the last stage of the construction of $N$ immediately implies that no such essential annulus can exist.

It remains only to show that if $C'$ is any other curve in $M$ with $C'\ne C$, then $C'$ is not homotopic into $\partial N$.  Given how $N$ was constructed, this means we need only show that $C'$ is not homotopic into $\partial N = S_1$, because if $C'$ is not homotopic into $\partial M$, then clearly $C'$ is not homotopic into $\partial N$.  So, we can assume that $C'$ is homotopic into $S_1$.  However, by assumption, we have that $C'\ne C$.  So, the uniqueness of the essential annulus in the last stage of the construction of $N$ again immediately implies that $C'$ cannot be homotopic into $\partial N$, and we are done.
\end{proof}

\section{Proof of Theorem \ref{separating-by-length}}
\label{proof main theorem}

The purpose of this Section is to complete the proof of Theorem \ref{separating-by-length}.   The key technical Lemma in this Section is a direct consequence of Theorem \ref{thurston compactness}, which can be viewed as a partial extension of Lemma \ref{hempel} to $3$-manifolds.

\begin{lemma} Let $M$ be a compact, hyperbolizable, acylindrical $3$-manifold with non-empty, incompressible, atoroidal boundary, and let $C$ be a curve in $M$.  If $C$ is not homotopic to a simple curve in $\partial M$, there exists a constant $K = k(M,C) >0$ so that $\ell_C (\rho) \ge K$ for all $\rho\in {\cal CC}(\pi_1(M))$.
\label{acylindrical-bound-element}
\end{lemma}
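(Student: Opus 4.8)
The plan is to argue by contradiction using the compactness of ${\rm AH}(\pi_1(M))$ guaranteed by Theorem \ref{thurston compactness}, together with the observation that the only way a curve can have length tending to $0$ over the hyperbolic structures on ${\rm int}(M)$ is for it to be homotopic into $\partial M$. So suppose no such lower bound $K$ exists: then there is a sequence $\{\rho_n\}\subset {\cal CC}(\pi_1(M))$ with $\ell_C(\rho_n)\to 0$. Since $M$ is acylindrical we have ${\cal CC}(\pi_1(M)) = {\cal CC}_0(\pi_1(M))$, so each $\rho_n$ gives a hyperbolic structure on ${\rm int}(M)$ itself. The first step is to pass to the quotient ${\rm AH}(\pi_1(M))$ and invoke Theorem \ref{thurston compactness}: after conjugating each $\rho_n$ by a suitable $h_n\in {\rm PSL}_2({\bf C})$, we may assume $\{\rho_n\}$ itself converges (in ${\cal D}(\pi_1(M))$) to a representation $\rho_\infty$, which again lies in ${\cal D}(\pi_1(M))$ since ${\cal D}(\pi_1(M))$ is closed by J{\o}rgensen's theorem; being a limit of discrete faithful representations, $\rho_\infty(\pi_1(M))$ is a Kleinian group.

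The second step is to extract the geometric consequence of $\ell_C(\rho_n)\to 0$. Let $c\in\pi_1(M)$ be the maximal element representing $C$. Because traces (equivalently, translation lengths) vary continuously on ${\cal D}(\pi_1(M))$, the limit $\ell_C(\rho_n)\to 0$ forces $\rho_\infty(c)$ to be parabolic or the identity; since $\rho_\infty$ is faithful, $\rho_\infty(c)$ is parabolic. Thus $\rho_\infty(\pi_1(M))$ is a Kleinian group uniformizing a manifold homotopy equivalent to $M$ in which $c$ has become parabolic. The third step is to translate "$c$ is parabolic in $\rho_\infty$" back into topology: by the standard theory of geometrically finite (or, more precisely, algebraically convergent) Kleinian groups uniformizing $M$ — together with the fact that $M$ is acylindrical, so that $\rho_\infty$ lies in the single component ${\cal CC}_0$ and its algebraic limit's relationship to $M$ is controlled — the new parabolic must correspond to a simple curve on $\partial M$. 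Concretely, a maximal ${\bf Z}$ subgroup of $\pi_1(M)$ that is parabolic in some point of $\overline{{\cal CC}_0(\pi_1(M))}$ is the image of a simple closed curve on $\partial M$; this is precisely the content of the analysis of the boundary of the deformation space for acylindrical $M$ (Thurston, and Canary–McCullough \cite{canary mccullough}). Hence $C$ is homotopic to a simple curve on $\partial M$, contradicting the hypothesis. Therefore the desired $K=k(M,C)>0$ exists.

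The main obstacle I anticipate is the third step: carefully justifying that a curve which becomes parabolic "at the boundary" of ${\cal CC}_0(\pi_1(M))$ must be a simple curve on $\partial M$. One must rule out the possibility that $c$ becomes parabolic while being homotopic only to a \emph{non-simple} curve on $\partial M$, or not homotopic into $\partial M$ at all; here acylindricity is essential, since it forces the geometric limit's parabolic locus to consist of curves that are simple on the boundary (non-simple boundary curves, or interior curves, would produce annuli or accidental parabolics incompatible with acylindricity via the Thurston–Jorgensen theory, or with Bonahon's boundedness results). An alternative route for this step, avoiding heavy machinery, is to note that if $c$ were not homotopic to a simple curve on $\partial M$ then by the surface-subgroup or compact-core analysis one could produce an essential annulus or a contradiction with the structure of ${\cal CC}_0$; but the cleanest formulation is simply to cite the known description of parabolics arising in $\partial{\cal CC}_0(\pi_1(M))$ for acylindrical $M$. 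The remaining steps — continuity of length, closedness of ${\cal D}$, compactness of ${\rm AH}$ — are all either quoted results or routine.
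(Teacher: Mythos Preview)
Your approach matches the paper's exactly: argue by contradiction, use Thurston's compactness (Theorem~\ref{thurston compactness}) to extract a limit $\rho_\infty \in {\cal D}(\pi_1(M))$, observe that $\rho_\infty(c)$ must be parabolic (non-trivial by J{\o}rgensen), and then deduce that $C$ is homotopic to a simple curve on $\partial M$.

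The step you correctly flag as the obstacle --- your ``third step'' --- is exactly where the paper supplies a concrete argument rather than a citation. Instead of appealing to a general description of parabolics on $\partial{\cal CC}_0$, the paper removes a $\rho_\infty(\pi_1(M))$-invariant packing of horoballs (Margulis), takes McCullough's relative compact core $P$ of the resulting quotient manifold with annular boundary, so that the inclusion $P\hookrightarrow N_{\rho_\infty(\pi_1(M))}$ is a homotopy equivalence and each annular cusp meets $\partial P$ in a single incompressible annulus. One checks directly that $P$ is acylindrical with non-empty, incompressible, atoroidal boundary, and that $\rho_\infty(C)$, being parabolic, is peripheral in $P$ and hence homotopic to a \emph{simple} curve on $\partial P$ (simplicity coming from maximality of $c$). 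Finally, Johannson's theorem --- a homotopy equivalence between acylindrical $3$-manifolds is homotopic to a homeomorphism --- transports this conclusion from $\partial P$ back to $\partial M$. So the ingredients you were missing are precisely ``McCullough relative core $+$ Johannson rigidity''; your instinct to cite structure theory of the deformation-space boundary is morally correct, but the paper's route is self-contained and cleaner here.
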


\begin{proof} We prove the Lemma by contradiction.  Assume that no such constant $k(M,C)$ exists, so that there exists a sequence $\{\rho_n\}\subset {\cal CC}(\pi_1(M))$ for which $\ell_C (\rho_n) \rightarrow 0$ as $n\rightarrow\infty$.  By Theorem \ref{thurston compactness}, we see (by extracting the convergent subsequence using Theorem \ref{thurston compactness} and lifting back to ${\cal D}(\pi_1(M))$) that the sequence $\{ \rho_n\}$ has a subsequence, again denoted $\{ \rho_n\}$, so that $\{ \rho_n\}$ converges to $\rho\in {\cal D}(\pi_1(M))$.    Since $\ell_C(\rho_n) \rightarrow 0$, standard deformation theory arguments using J\o rgensen's inequality imply that $\rho(C)$ cannot be trivial and hence we see that $\rho(C)$ must be parabolic.  

We need the following definition.  Let $\Gamma$ be a torsion-free Kleinian group and let $\Phi \subset \Gamma$ be a maximal, purely parabolic subgroup.  There then exists a {\em horoball} associated to $\Phi$, which is an open Euclidean ball $H_\Phi\subset {\mathbb H}^3$ invariant under the action of $\Phi$, so that $\partial H_\Phi$ intersects the Riemann sphere $\overline{\mathbb C}$ in a single point which is the common fixed point of all of the non-trivial elements of $\Phi$.    As a standard consequence of the Margulis Lemma, there exists a collection ${\cal H}$ of disjoint horoballs invariant under the action of $\Gamma$ so that there exists a horoball associated to each maximal, purely parabolic subgroup of $\Gamma$ and each horoball is associated to such a subgroup.    In general, the quotient $N_\Gamma = ({\mathbb H}^3 \setminus \cup_{H\in {\cal H}} H)/\Gamma$ is then a $3$-manifold with boundary, where the boundary is a collection of open annuli and/or tori.  

For the group $\rho(\pi_1(M))$, the assumption that $M$ has atoroidal boundary implies that $\pi_1(M)$ contains no ${\mathbb Z}\oplus {\mathbb Z}$ subgroups, and so the boundary components of $N_{\rho(\pi_1(M))}$ are open annuli corresponding to the conjugacy classes of maximal parabolic subgroups of $\rho(\pi_1(M))$.  By McCullough \cite{mccullough}, there exists a compact, hyperbolizable $3$-submanifold $P\subset N_{\rho(\pi_1(M))}$ so that the inclusion of $P$ into $N_{\rho(\pi_1(M))}$ is a homotopy equivalence and each component of $\partial N_{\rho(\pi_1(M))}$ intersects $\partial P$ in a single incompressible annulus.   In particular, we see immediately that $P$ must be acylindrical and have non-empty, incompressible, atoroidal boundary.

By construction, $\rho(C)$ is peripheral in $N_{\rho(\pi_1(M))}$ and hence in $P$, and in particularly $C$ is freely homotopic to a simple curve on $\partial P$; the simplicity follows immediately from the assumption that $C$ is maximal.  

Since $M$ and $P$ are compact, irreducible $3$-manifolds with isomorphic fundamental groups, they are homotopy equivalent.  Since $M$ is acylindrical, we can apply the result of Johannson \cite{johannson} to see that $M$ and $P$ are in fact homeomorphic, and so $C$ is freely homotopic to a simple curve in $\partial M$.  This contradiction completes the proof of the Lemma.
\end{proof}

We are now ready to prove Theorem \ref{separating-by-length}.

\begin{proof} [Proof of Theorem \ref{separating-by-length}] We are given a compact, hyperbolizable $3$-manifold $M$ with non-empty, incompressible, atoroidal boundary and a curve $C$ in $M$ freely homotopic to a simple curve on $\partial M$.  Let $C'$ be a curve in $M$ satisfying $C'\ne C$.  

By Lemma \ref{final capping off}, there exists a compact, hyperbolizable, acylindrical $3$-manifold $N$ with non-empty, incompressible, atoroidal boundary so that $N$ contains $M$ as an incompressible $3$-submanifold, so that $C$ is freely homotopic to a  simple curve on $\partial N$, and so that $C'$ is not freely homotopic into $\partial N$. 

By Proposition \ref{maskit-squeezing}, there exists a sequence $\{ \rho_n\}\subset {\cal CC}(\pi_1(N))$ so that $\ell_C (\rho_n)\rightarrow 0$ as $n\rightarrow \infty$.  However, by Proposition \ref{acylindrical-bound-element}, we have that there exists a constant $K = k(N,C) >0$ so that $\ell_{C'} (\rho_n)\ge K$ for all $n$.  Restricting to $\pi_1(M)$, as in Lemma \ref{restricting submanifold}, we then have a sequence $\{ \rho_n\}\subset {\cal CC}_0 (\pi_1(M))$ so that $\ell_C (\rho_n)\rightarrow 0$ as $n\rightarrow \infty$ and $\ell_{C'} (\rho_n)\ge K$ for all $n$.  This contradicts the assumption that $\ell_C (\rho) = \ell_{C'} (\rho)$ for all $\rho\in {\cal CC}_0(\pi_1(M))$.

The case that $M$ is an $I$-bundle over a surface follows directly from the discussion in Section \ref{curves on surfaces}.  
\end{proof}

Also, it is as yet unknown whether there exists a direct proof of Lemma \ref{acylindrical-bound-element} for $3$-manifolds containing essential annuli, which would not then require the topological joinery discussed in Section \ref{joinery} that is used to reduce the general case to the acylindrical case. 

We spend the remainder of this Section engaging in some speculation.   We start by noting that it is not possible to make the constant $k(M,C)$ appearing in Lemma \ref{acylindrical-bound-element} independent of $M$.  To show this, we make use of the following simple case of Comar's variant of the Hyperbolic Dehn Filling Theorem.

\begin{theorem} [see Comar \cite{comar}] Let $M$ be a compact, hyperbolizable $3$-manifold whose incompressible boundary $\partial M = S\cup T$ is the union of two surfaces, a torus $T $and a surface $S$ of genus at least two. Let $N= {\mathbb H}^3/\Gamma$ be a geometrically finite hyperbolic $3$-manifold and let $\psi: {\rm int}(M)\rightarrow N$ be an orientation-preserving homeomorphism.  Further assume that every parabolic element of $\Gamma$ lies (up to conjugacy) in the rank-two parabolic  subgroup corresponding to $\pi_1(T)$. Let $(m, \ell)$ be a meridian-longitude basis for $T$, and let $(p_n, q_n) \rightarrow\infty$ be a divergent sequence of  pairs of relatively prime integers.

Then, for all sufficiently large $n$, there exists a representation $\beta_n\colon\Gamma\rightarrow {\rm PSL}_2 ({\mathbb C})$ with discrete image such that
\begin{enumerate}
\item $\beta_n(\Gamma)$ is a geometrically finite Kleinian group without parabolic elements uniformizing the $3$-manifold $M (p_n,q_n)$ obtained from $M$ by performing $(p_n, q_n)$ Dehn surgery along $T$;
\item the kernel of $\beta_n\circ\psi_*$ is normally generated by $\{ m^{p_n} \ell^{q_n}\}$; and
\item $\{ \beta_n\}$ converges to the identity representation of $\Gamma$. 
\end{enumerate}
Moreover, if $i_n\colon M\rightarrow M(p_n, q_n)$ denotes the inclusion map, then for each $n$, there exists an orientation-preserving homeomorphism 
\[ \psi_n\colon {\rm int}(M (p_n, q_n)) \rightarrow {\mathbb H}^3/\beta_n(\Gamma)\] 
such that $\beta_n\circ \psi_*$ is conjugate to $(\psi_n)_*\circ (i_n)_*$.
\label{comar dehn}
\end{theorem}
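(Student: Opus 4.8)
The plan is to obtain Theorem \ref{comar dehn} as a geometrically finite analogue of Thurston's hyperbolic Dehn surgery theorem, deforming only the rank-two cusp of $\Gamma$ corresponding to $\pi_1(T)$ while holding fixed the conformal structure at the end facing $S$. First I would set up the relevant slice of deformation space. Since $\Gamma$ is geometrically finite with parabolic locus exactly $\pi_1(T)$, a neighborhood of the inclusion representation in the character variety of $\Gamma$ is smooth, and the torus cusp contributes a single complex parameter to its dimension. Write $\pi_1(T) = \langle m,\ell\rangle$; for a representation $\rho$ near the inclusion the elements $\rho(m)$ and $\rho(\ell)$ commute and share an axis, and we record the pair $(u(\rho), v(\rho))$ of their complex translation lengths, normalized so that $u = v = 0$ at $\rho = {\rm id}$. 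The structural input --- proved by the standard deformation argument, combining local rigidity of $\Gamma$ relative to its boundary with the quasiconformal deformation theory of the $S$-end --- is that, with the $S$-end held fixed, this slice is one complex-dimensional, $u$ is a coordinate on it near $0$, and $v = v(u)$ is holomorphic with $v(0) = 0$ and $v'(0)$ equal to the modulus of the torus $T$.

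Next I would solve the Dehn filling equation. The filled manifold $M(p,q)$ is formed by attaching a solid torus along $T$ so that $m^p\ell^q$ bounds a disc; geometrically one wants the deformed structure in which the holonomy of $m^p\ell^q$ is a rotation by $2\pi$, which in the coordinates above is the equation $pu + qv(u) = 2\pi i$ (up to sign). Because $(p_n,q_n)\to\infty$, for all large $n$ this equation has a unique solution $u_n$ near $0$, with $u_n\to 0$, so the associated representations $\beta_n\colon\Gamma\to{\rm PSL}_2({\bf C})$ converge to the identity representation, which is conclusion (3). The substantive work is to show that $\beta_n(\Gamma)$ is discrete and geometrically finite, that it uniformizes $M(p_n,q_n)$, and that the kernel of $\beta_n\circ\psi_*$ is normally generated by $m^{p_n}\ell^{q_n}$. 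For this I would run the usual surgery argument: complete the incomplete structure coming from $\beta_n$ by gluing in the core solid torus, use a Margulis-tube estimate to see that for large $n$ this core is an embedded short geodesic, and check that the $\beta_n(\Gamma)$ converge geometrically to $\Gamma$, so that the completed structure is a complete hyperbolic structure on ${\rm int}(M(p_n,q_n))$; its holonomy is then the representation induced by $\beta_n$ on $\pi_1(M(p_n,q_n)) = \pi_1(M)/\langle\langle m^{p_n}\ell^{q_n}\rangle\rangle$, which gives discreteness, the kernel statement, and a homeomorphism $\psi_n$ with $\beta_n\circ\psi_*$ conjugate to $(\psi_n)_*\circ(i_n)_*$. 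Geometric finiteness, and the absence of parabolics, follow because the $S$-end has been kept a fixed geometrically finite (quasi-Fuchsian-type) end throughout while the only new end is bounded by an embedded Margulis tube --- so in fact $\beta_n(\Gamma)$ is convex co-compact.

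I expect the main obstacle to be precisely the discreteness and geometric finiteness step: upgrading the formal solution of the surgery equation to a genuine Kleinian group uniformizing the filled manifold. In the finite-volume case this is the full content of Thurston's hyperbolic Dehn surgery theorem; here one must additionally control the interaction between the cusp deformation and the infinite-volume $S$-end, verifying that completeness of the filled structure and discreteness survive and that geometric finiteness is preserved. The cleanest route I know of replaces the single deformation by a path of cone-manifold structures on $M(p_n,q_n)$ interpolating between cone angle $0$ (the original cusped structure on ${\rm int}(M)$) and cone angle $2\pi$ (the filled structure) while keeping the $S$-end fixed, and invokes the cone-manifold deformation theory of Hodgson--Kerckhoff and Bromberg to keep the structures discrete and geometrically finite all along the path, with a Margulis-tube and geometric-limit argument at the cone-angle $2\pi$ endpoint to recognize the limit as $M(p_n,q_n)$. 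Absent that machinery, the argument is the more delicate direct one sketched above, and the careful bookkeeping of the developing map near the deforming cusp is where the real work lies.
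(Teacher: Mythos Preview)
The paper does not prove this theorem; it is quoted from Comar's thesis \cite{comar} as a black box (``a simple case of Comar's variant of the Hyperbolic Dehn Filling Theorem'') and then applied in the proof of Proposition~\ref{curves getting short}. So there is no proof in the paper against which to compare your proposal.

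That said, your sketch is a reasonable outline of the standard argument for geometrically finite Dehn filling: parametrize the cusp deformation by the complex length of the meridian, solve the filling equation $p u + q v(u) = 2\pi i$ for large $(p,q)$, and then argue discreteness, geometric finiteness and identification of the quotient with $M(p,q)$ either via a direct developing-map/geometric-limit analysis or via the Hodgson--Kerckhoff/Bromberg cone-manifold path. You have correctly identified the genuine content as lying in the discreteness and geometric finiteness step in the presence of the infinite-volume $S$-end, and your two suggested routes (direct geometric-limit argument versus cone-manifold deformation) are exactly the ones used in the literature. If you want to pursue this, Comar's thesis carries out the first route; Bromberg's work on cone-manifolds with rank-one cusps gives machinery for the second.
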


As we have already seen, there are many possible hyperbolic structures on $3$-manifolds whose boundary is not the (possibly empty) union of tori, and so it is not clear {\em a priori} how to get a hyperbolic structure on the Dehn surgered manifold $M(p,q)$.  One consequence of Theorem \ref{comar dehn} is that the hyperbolic structure on the original manifold $M$ is used to then impose a hyperbolic structure on $M(p,q)$; this follows from using $\Gamma$ and its images under the $\beta_n$.   The other important consequence of Dehn surgery is that the length of the closed geodesic in the Dehn surgered manifold $M(p,q)$ homotopic to the core curve of the solid torus glued to $M$ to form $M(p,q)$ goes to $0$ as $(p,q)$ moves farther from $(0,0)$.  These are the two facts that we need to prove the following Lemma.

\begin{proposition}  For $n\ge 1$, there exist compact, hyperbolizable $3$-manifolds $M_n$, each with non-empty, incompressible, connected, atoroidal boundary; hyperbolic structures $x_n$ on ${\rm int}(M_n)$; and curves $C_n$ in $M_n$ so that $C_n$ is not homotopic into $\partial M_n$ and ${\rm \ell}_{C_n}(x_n)\rightarrow 0$. 
\label{curves getting short}
\end{proposition}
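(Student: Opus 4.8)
The plan is to build the manifolds $M_n$ by starting with a fixed hyperbolizable manifold $W$ having two boundary components --- a torus $T$ and a higher-genus surface $S$ --- together with a fixed curve $C$ in $W$ that is the core of a solid-torus-like region attached along $T$, so that $C$ records the "meridian direction" of $T$, and is not homotopic into $\partial W$. Concretely, one wants $W$ with $\partial W = S \cup T$, $S$ incompressible of genus at least two, $T$ incompressible, and $W$ acylindrical (or at least such that no parabolics other than those in $\pi_1(T)$ arise); such a $W$ exists by a variant of Theorem \ref{fujii} --- indeed one can take an acylindrical manifold with one genus-$(g+1)$ boundary component and remove a small unknotted solid torus, or glue Fujii's pieces appropriately, arranging that the core curve of the removed solid torus is $C$.

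First I would invoke Theorem \ref{comar dehn}: choosing any geometrically finite hyperbolic structure $N = {\bf H}^3/\Gamma$ on ${\rm int}(W)$ all of whose parabolics lie in the $\pi_1(T)$ subgroup (this exists since $W$ is hyperbolizable and $T$ is a torus boundary component), and picking a divergent sequence $(p_n, q_n)$ of coprime integers, I obtain geometrically finite Kleinian groups $\beta_n(\Gamma)$ without parabolics uniformizing $M_n := W(p_n, q_n)$, with $\{\beta_n\}$ converging to the identity representation of $\Gamma$. Each $M_n$ is a compact hyperbolizable $3$-manifold whose boundary is just $S$ (the torus is filled), hence non-empty, incompressible, connected, and atoroidal, as required. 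Set $x_n$ to be the hyperbolic structure on ${\rm int}(M_n)$ coming from $\beta_n(\Gamma)$ via the homeomorphism $\psi_n$ of Theorem \ref{comar dehn}.

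Next I would identify the curve $C_n$ in $M_n$: it is the image under $i_n$ of $C$, equivalently the core curve of the solid torus glued to $W$ in the surgery, which is the curve on $\partial(\text{solid torus})$ dual to $m^{p_n}\ell^{q_n}$. Since $m^{p_n}\ell^{q_n}$ is the normal generator of the kernel of $\beta_n \circ \psi_\ast$ (part (2) of Theorem \ref{comar dehn}), the element of $\pi_1(T)$ represented by a curve transverse to it has $\beta_n$-image a loxodromic element whose translation length is comparable to the length of the corresponding closed geodesic in the Dehn-filled manifold; by the standard Hyperbolic Dehn Filling estimate (the core geodesic of a filling solid torus has length tending to $0$ as $(p_n,q_n)\to\infty$), and because $\{\beta_n\}\to \mathrm{id}$ forces the filling to be "deep," we get $\ell_{C_n}(x_n) = \ell_{C_n}(\beta_n) \to 0$. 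Finally I must check $C_n$ is not homotopic into $\partial M_n = S$: this is a purely topological fact about $W$ and the surgery, namely that the core curve of the filling solid torus is not peripheral in $W(p_n,q_n)$ when the filling slope is chosen generically; one arranges this at the outset by choosing $W$ and the framing so that $C$ is "knotted" relative to $T$ --- for instance so that no essential annulus in $W$ runs from a neighborhood of $C$ to $S$, which holds since $W$ is acylindrical away from the obvious torus-parallel annuli, and is preserved for all but finitely many slopes.

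The main obstacle I expect is the last point --- ensuring $C_n$ is genuinely non-peripheral in $M_n$ for infinitely many $n$, rather than accidentally becoming homotopic into $S$ after filling. This requires care in the construction of $W$ (so that the solid-torus core $C$ is not co-annular with $S$) and an observation that being homotopic into $\partial M_n$ is a "closed" condition excluded for all but finitely many filling slopes; alternatively one can sidestep it by noting that if $C_n$ were peripheral for infinitely many $n$, then since $\{\beta_n\}\to\mathrm{id}$ one could extract a limiting statement about $\Gamma$ contradicting the acylindricity / the choice that $C$ is non-peripheral in $W$. A secondary technical point is making Comar's theorem applicable at all, i.e. exhibiting a geometrically finite structure on ${\rm int}(W)$ whose only parabolics come from $\pi_1(T)$ --- but this is guaranteed by geometrization together with the fact that one may take the structure in the interior of the appropriate quasiconformal deformation space, so it should not cause real difficulty.
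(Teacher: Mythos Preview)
Your approach is essentially the paper's own: start with an acylindrical $W$ whose boundary is a torus $T$ together with a higher-genus surface $S$, apply Comar's Dehn filling theorem along $T$ to produce the $M_n = W(p_n,q_n)$ with hyperbolic structures $x_n$, take $C_n$ to be the core of the filling solid torus, and use the standard fact that the core geodesic shrinks to zero length as $(p_n,q_n)\to\infty$.

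The one place you diverge is in your treatment of the non-peripherality of $C_n$, which you flag as the ``main obstacle'' and hedge with talk of generic slopes, knotting, and limiting arguments. The paper handles this in one line, and in fact your own parenthetical already contains the argument: if $C_n$ were homotopic into $\partial M_n = S$, then the homotopy, intersected with $W \subset W(p_n,q_n)$, would yield an essential annulus in $W$ with one boundary component on $T$ and the other on $S$, contradicting the acylindricity of $W$. No genericity, no ``all but finitely many slopes,'' no limiting argument is needed --- acylindricity of the original $W$ rules this out for \emph{every} filling. Your instinct that acylindricity is the relevant hypothesis is correct; you should trust it and drop the surrounding hedges.
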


\begin{proof} Let $M$ be a compact, hyperbolizable, acylindrical $3$-manifold whose boundary is the union of two surfaces, a torus $T$ and a surface $S$ of genus at least two.  Choose a meridian-longitude basis $(m, \ell)$ for $T$, and let $(p_n, q_n) \rightarrow\infty$ be a divergent sequence of  pairs of relatively prime integers.  Let $M(p_n,q_n)$ be the result of performing $(p_n, q_n)$ Dehn surgery on $M$, as described in Theorem \ref{comar dehn}.

We have already that there are natural hyperbolic structures on the ${\rm int}(M_n)$, given a hyperbolic structure on ${\rm int}(M)$, so that the lengths of the core curves $C_n$ of the Dehn surgered manifolds are going to $0$.  The only thing remaining to show is that the $C_n$ are not homotopic to simple curves in $\partial M_n$.  In fact, we can see that $C_n$ is not homotopic to any curve in $\partial M_n$, using the same sorts of arguments as were given earlier, as such a homotopy would allow for the construction on an essential annulus in $M\subset M(p_n,q_n)$ with one boundary component of the annulus lying in $T$ and the other lying in $S$.  
\end{proof}

Lemma \ref{acylindrical-bound-element} raises the interesting question of the extent to which the bound $k(M,C)$ can be made independent of the curve $C$ in $M$.  At this point, we are willing to conjecture that the answer is Yes for acylindrical $3$-manifolds (though we do not have strong evidence to support this conjecture), but we are unwilling to advance this conjecture in the case where $M$ contains essential annuli.

\footnotesize{
 
}

\medskip
\noindent
{Mathematical Sciences\\University of Southampton\\Southampton SO17 1BJ\\England\\j.w.anderson@southampton.ac.uk}

\end{document}